\newtheorem{theorem}{Theorem}[section]          
\newtheorem{lemma}[theorem]{Lemma}              
\newtheorem{corollary}[theorem]{Corollary}      
\newtheorem{observation}[theorem]{Observation}
\theoremstyle{definition}
\newtheorem{definition}[theorem]{Definition} 	
\newtheorem{remark}[theorem]{Remark}
\newtheorem{example}[theorem]{Example}
\newcommand{\bb}[1]{\mathbb{#1}}								
\newcommand{\coni}[1]{\operatorname{coni}\left( #1 \right)}     
\newcommand{\conv}[1]{\operatorname{conv}\left( #1 \right)}     
\journal{Linear Algebra and its Applications}
\begin{document}

\begin{frontmatter}
\title{Realizing Sule\u{\i}manova spectra via permutative matrices, II}

\author[uwb]{Pietro Paparella\corref{cor1}}
\address[uwb]{Division of Engineering and Mathematics, University of Washington Bothell, Bothell, WA 98011-8246, USA}
\cortext[cor1]{Corresponding author.}
\ead{pietrop@uw.edu}
\ead[url]{http://faculty.washington.edu/pietrop/}

\author[ua]{Amber R.~Thrall\fnref{marygatesthanks}}
\address[ua]{Department of Mathematics, University of Arizona, Tucson, AZ 85721-0089 USA}
\ead{arthrall@math.arizona.edu}
\ead[url]{https://amber.thrall.me/}
\fntext[marygatesthanks]{Supported by a University of Washington Mary Gates Research Scholarship.}

\begin{abstract}
In this work, the real nonnegative inverse eigenvalue problem is solved for a particular class of permutative matrix. The necessary and sufficient condition there is also shown to be sufficient for the symmetric nonnegative inverse eigenvalue problem. A result due to Johnson and Paparella [MR3452738; Linear Algebra Appl.~493 (2016), 281--300] is extended to include normalized lists that satisfy the new sufficient condition.
\end{abstract}

\begin{keyword}
permutative matrix \sep Sule\u{\i}manova spectrum \sep nonnegative inverse eigenvalue problem \sep doubly stochastic matrix 
\MSC[2010] 15A29 \sep 15A18 \sep 15A42 \sep 15B48 \sep 15B51
\end{keyword}

\end{frontmatter}

\section{Introduction}

The longstanding \emph{real nonnegative inverse eigenvalue problem} (RNIEP) is to determine necessary and sufficient conditions on a multiset (herein, \emph{list}) of real numbers $\Lambda = \{ \lambda_1, \dots, \lambda_n \}$ such that $\Lambda$ is the spectrum of an $n$-by-$n$ nonnegative matrix (entrywise). 

If $A$ is an $n$-by-$n$ nonnegative matrix with spectrum $\Lambda$, then $\Lambda$ is said to be \emph{realizable} and the matrix $A$ is called a \emph{realizing matrix} for $\Lambda$. If the realizing matrix is required to be symmetric, then the problem is known as the \emph{symmetric nonnegative inverse eigenvalue problem} (SNIEP). Both problems are unsolved when $n \ge 5$.

It is well-known that if $\Lambda$ is realizable, then  
\begin{equation}
s_k (\Lambda) := \sum_{i=1}^n \lambda_i^k = \trace(A^k) \geq 0,~\forall~k \in \bb{N}  \label{cond1}
\end{equation}
and 
\begin{equation}
\rho(\Lambda) := \max_{1\leq i \leq n} \{ |\lambda_i| \} \in \Lambda.           \label{cond2}
\end{equation}
Condition \eqref{cond2} follows from the Perron-Frobenius theorem. For additional background and results on the RNIEP and the general \emph{nonnegative inverse eigenvalue problem (NIEP)}, there are several surveys \cite{egleston2004,niepsurvey} and a monograph \cite{minc1988}.

A list of real numbers $\Lambda = \{ \lambda_1, \dots, \lambda_n \}$ is called a \emph{Sule\u{\i}manova spectrum} if $s_1(\Lambda) \geq 0$ and $\Lambda$ contains exactly one positive element. In the seminal work on the NIEP, Sule\u{\i}manova \cite{suleimanova1949} announced that every such spectrum is realizable. Friedland \cite{friedland1978} and Perfect \cite{perfect1953} proved Sule\u{\i}manova's result via companion matrices (for other proofs, see references in \cite{friedland1978}). More recently, Paparella \cite{paparella2016} gave a constructive proof that every Sule\u{\i}manova spectrum is realizable by utilizing \emph{permutative matrices}, which are defined in the sequel. 

Fiedler \cite{fiedler1974} showed that every Sule\u{\i}ma\-nova spectrum is \emph{symmetrically realizable}. Since every symmetric matrix is diagonalizable, it follows that every Sule\u{\i}manova spectrum is diagonalizably realizable, and, hence, a solution to the \emph{diagonzalizable real nonnegative inverse eigenvalue problem} (DRNIEP). Johnson et al.~\cite{johnsonlaffeyloewy1996} showed that the RNIEP and SNIEP are distinct; Laffey \cite{laff1995} showed that not every realizable list is diagonalizably realizable;  and, more recently, Cronin and Laffey \cite{cl2017} showed that the SNIEP and the DRNIEP are distinct. Thus, with a slight abuse of notation, we have
\[ \text{SNIEP}_n \subset \text{DRNIEP}_n \subset \text{RNIEP}_n. \]

It is natural to ask whether every Sule\u{\i}ma\-nova spectrum is realizable by a nonnegative asymmetric matrix. A positive answer is given by Soto and Ccapa \cite{sotoccapa2008}, however it will be shown in the sequel that permutative matrices provide a simple proof of this fact. In addition, we solve the RNIEP for a particular class of permutative matrix and derive a novel sufficient condition for the SNIEP. We find the position of this sufficient condition with respect to other well-known conditions for sufficiency \cite{mps2007,mps2017}. A result due to Johnson and Paparella \cite[Theorem 6.3]{johnsonpaparella2016} is extended to include normalized lists that satisfy the new sufficient condition.

\section{Notation \& Background}

The following notation is adopted throughout this work. 

For $n \in \mathbb{N}$, we let $\langle n\rangle$ denote the set $\{1,\dots,n\}$. We let $\mathbb{F}^n$ denote the set of all column vectors with entries from a field $\mathbb{F}$ and ${M}_n(\mathbb{F})$ denote the set of all square $n$-by-$n$ matrices with entries from a field $\mathbb{F}$. As is customary, if $A \in M_n (\mathbb{F})$, then $a_{ij}$ denotes the $(i,j)$-entry of $A$; if $x \in \mathbb{F}^n$, then $x_i$ denotes the $i\textsuperscript{th}$ entry of $x$. For $x \in \mathbb{F}^n$, we let $\text{diag}(x)$ denote the diagonal matrix such that $d_{ii} = x_i$. The spectrum of $A \in M_n(\mathbb{F})$ is denoted by $\sigma(A)$.

For the following, the size of each object will be clear from the context in which it appears: $e$ denotes the all-ones column vector; $J$ denotes the all-ones matrix, i.e., $J:=ee^\top$; $I$ denotes the identity matrix; and $e_i$ denotes the $i$th column of $I$ (note that this is an exception to the convention stated above). 

\begin{definition}
\label{defn:suleimanova}
A list of real numbers $\Lambda=\{\lambda_1,\dots,\lambda_n\}$ is called a \textit{Sule\u{\i}manova spectrum} if \(s_1(\Lambda)\ge 0\) and $\Lambda$ contains exactly one positive element.
\end{definition}

Denote by \(S_n\) the symmetric group of order \(n\). For \( \pi \in S_n\) and \(x \in \mathbb{F}^n\), let \( \pi(x)^\top := \begin{bmatrix} x_{\pi(1)} & \cdots & x_{\pi(n)} \end{bmatrix}^\top \).  

\begin{definition}
\label{defn:permutative}
A matrix $P \in {M}_n(\mathbb{C})$ is called \emph{permutative} or a \emph{permutative matrix} if
\begin{equation*}
P = \begin{bmatrix}
\pi_1(x)^\top	\\
\vdots          	\\
\pi_n(x)^\top
\end{bmatrix},
\end{equation*}
where \(\pi_1,\dots,\pi_n \in S_n\) and \(x \in \mathbb{C}^n\).
\end{definition}

\begin{remark}
Definition \ref{defn:permutative} differs from the definition of permutative matrix given in \cite{paparella2016}, which involves permutation matrices. Since permutation matrices are permutative, it follows that the definition given here is exactly what is desired.
\end{remark}

In \cite{paparella2016}, permutative matrices were utilized to give a simple, constructive proof of the following result.  

\begin{theorem}
[Sule\u{\i}manova \cite{suleimanova1949}]
\label{thm:paparella2016}
    Every Sule\u{\i}manova spectrum is realizable.
\end{theorem}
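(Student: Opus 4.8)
The plan is to exhibit an explicit nonnegative permutative realizing matrix built from a single vector and the transpositions $(1\,j)$. First I would relabel the list so that the unique positive element is $\lambda_1$ and $\lambda_1 \geq \lambda_2 \geq \cdots \geq \lambda_n$, so that $\lambda_j \leq 0$ for every $j \geq 2$. I would then define $x \in \bb{R}^n$ by $x_1 := s_1(\Lambda)/n$ and $x_j := x_1 - \lambda_j$ for $2 \leq j \leq n$. The Sule\u{\i}manova hypothesis $s_1(\Lambda) \geq 0$ gives $x_1 \geq 0$, and since $\lambda_j \leq 0$ for $j \geq 2$ it gives $x_j \geq 0$ as well; hence $x$ is nonnegative. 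Taking $\pi_1 := \mathrm{id}$ and $\pi_j := (1\,j) \in S_n$ for $j \geq 2$, I would let $P$ be the permutative matrix with rows $\pi_1(x)^\top, \dots, \pi_n(x)^\top$. Concretely $P$ has every diagonal entry equal to $x_1$, first column equal to $x$, and $(i,j)$-entry equal to $x_j$ whenever $j \neq 1$ and $j \neq i$; by construction $P$ is permutative and entrywise nonnegative.

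The heart of the argument is computing $\sigma(P)$. Since every row of a permutative matrix is a rearrangement of $x$, every row sum equals $s_1(x) = \sum_i x_i$, so $Pe = s_1(x)\,e$; and $s_1(x) = n x_1 - \sum_{j \geq 2}\lambda_j = s_1(\Lambda) - \sum_{j \geq 2}\lambda_j = \lambda_1$. The claim is that the remaining eigenvalues are precisely the numbers $x_1 - x_j = \lambda_j$ for $j = 2, \dots, n$. To prove this I would write $P = x_1 I - \operatorname{diag}(w) + e w^\top + w e_1^\top$, where $w := (0, x_2, \dots, x_n)^\top$, thereby exhibiting $P$ as a rank-two perturbation of the diagonal matrix $M := \operatorname{diag}(x_1, x_1 - x_2, \dots, x_1 - x_n)$. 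Applying the matrix determinant lemma for a rank-two update to $\det(tI - P) = \det\big((tI - M) - e w^\top - w e_1^\top\big)$ expresses the characteristic polynomial explicitly in terms of the diagonal entries of $tI - M$; a short manipulation then shows that each $t = x_1 - x_j$, $j \geq 2$, is a root, the apparently singular terms cancelling.

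I expect this spectral computation to be the main obstacle: one must verify that the rank-two update leaves the eigenvalues $x_1 - x_j$ undisturbed while replacing the remaining eigenvalue $x_1$ of $M$ by $s_1(x) = \lambda_1$. Having produced $n$ roots $\lambda_1, x_1 - x_2, \dots, x_1 - x_n$ of the monic degree-$n$ polynomial $\det(tI - P)$, I would conclude $\sigma(P) = \Lambda$ by a multiplicity argument; cleanest is to establish the identity $\det(tI - P) = (t - s_1(x)) \prod_{j=2}^n \big(t - (x_1 - x_j)\big)$, which holds for generic $x$ (where the roots are distinct) and hence for all $x$ by continuity. As $x \geq 0$ forces $P \geq 0$, this realizes $\Lambda$ and proves the theorem; consistency with Perron--Frobenius is automatic, since $\sum_{j \geq 2}|\lambda_j| \leq \lambda_1$ yields $\rho(\Lambda) = \lambda_1 \in \sigma(P)$.
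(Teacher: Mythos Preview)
Your construction is exactly the one the paper summarizes from \cite{paparella2016}: the same permutative matrix $P_x$ built from the transpositions $(1\,j)$, with the same vector $x$ (your formulas $x_1=s_1(\Lambda)/n$ and $x_j=x_1-\lambda_j$ are precisely \eqref{novsuffcond}). For the spectrum, the paper bypasses your rank-two determinant computation by simply exhibiting the eigenvectors $e$ and the $v_i$ of \eqref{eq:vvec} and checking $P_x v_i=(x_1-x_i)v_i$ directly---though your route is also correct and, once carried out, collapses cleanly to $\det(tI-P_x)=(t-\Sigma)\prod_{j\ge 2}\bigl(t-(x_1-x_j)\bigr)$.
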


We summarize the relevant results from \cite{paparella2016}. For $x \in \mathbb{C}^n$, if 
\begin{equation}
\label{eq:perm_matrix}
P_x :=
\begin{blockarray}{ccccccc}
& \mbox{\scriptsize1} & \mbox{\scriptsize2} & \mbox{\scriptsize\dots} & \mbox{\scriptsize $i$} & \mbox{\scriptsize\dots} & \mbox{\scriptsize $n$} \\
\begin{block}{c[ cccccc ]}
    \mbox{\scriptsize1} & x_1 & x_2 & \dots & x_i & \dots & x_n \\
    \mbox{\scriptsize2} & x_2 & x_1 & \dots & x_i & \dots & x_n \\
    \mbox{\scriptsize\vdots} & \vdots & \vdots & \ddots & \vdots & & \vdots \\
    \mbox{\scriptsize $i$} & x_i & x_2 & \dots & x_1 & \dots & x_n \\
    \mbox{\scriptsize\vdots} & \vdots & \vdots & & \vdots & \ddots & \vdots \\
    \mbox{\scriptsize $n$} & x_n & x_2 & \dots & x_i & \dots & x_1 \\
\end{block}
\end{blockarray}
,
\end{equation}
then 
\[ \sigma(P_x) = \left\{ \sum_{i=1}^n x_i, x_1 - x_2, \dots,x_1 - x_n \right\}. \] 

Furthermore, if $\Lambda = \{ \lambda_1,\lambda_2, \dots, \lambda_n \}$ is a list of complex numbers and 
\begin{equation}
    x := \frac{1}{n}\begin{bmatrix}
        1 & e^\top \\
        e & J-nI
    \end{bmatrix}
    \begin{bmatrix}
    \lambda_1 \\
    \vdots \\
    \lambda_n
    \end{bmatrix}
    =
    \frac{1}{n}\begin{bmatrix}
        s_1(\Lambda) \\
        s_1(\Lambda) - n\lambda_2 \\
        \vdots \\
        s_1(\Lambda)-n\lambda_n
    \end{bmatrix},                      \label{novsuffcond}
\end{equation}
then $P_x$ has spectrum $\Lambda$.

Let $\Sigma := \sum_{k=1}^n x_k$. Since every row of $P_x$ sums to $\Sigma$, it follows that $(\Sigma,e)$ is a right-eigenpair for $P_x$. Furthermore, if $\delta_i := x_1 - x_i \ne \Sigma$, then $ (\delta_i,v_i)$ is a right-eigenpair for $P_x$, where 
\begin{equation}
\label{eq:vvec}
v_i :=
\begin{blockarray}{cc}
\begin{block}{c[c]}
\mbox{\scriptsize1} & x_i \\
\mbox{\scriptsize\vdots} & \vdots \\
\mbox{\scriptsize $i-1$} & x_i \\
\mbox{\scriptsize $i$} & x_1-\Sigma \\
\mbox{\scriptsize $i+1$} & x_i \\
\mbox{\scriptsize\vdots} & \vdots \\
\mbox{\scriptsize $n$} & x_i \\
\end{block}
\end{blockarray}
,~i \ne 1.    
\end{equation}

\section{Diagonalizable Realizibility of Sule\u{\i}manova Spectra}

\begin{lemma}
The matrix formed from the eigenvectors of \eqref{eq:perm_matrix} given by
\[ S = \begin{bmatrix}e&v_2&v_3&\dots&v_n\end{bmatrix}, \]
where $n\ge2$, is invertible whenever $x \ge 0$ and $x_1 \ne \Sigma$.
\end{lemma}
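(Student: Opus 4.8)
The plan is to compute $\det S$ explicitly and show it is nonzero under the stated hypotheses. The key observation is that each eigenvector in \eqref{eq:vvec} admits the compact form $v_i = x_i e + (x_1 - \Sigma - x_i)e_i$, so that, writing $c := x_1 - \Sigma$, the matrix $S$ has first column $e$ and, for each $j \ge 2$, a $j$th column equal to $x_j$ in every entry except the $j$th, which equals $c$.

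First I would perform the row operations $R_i \mapsto R_i - R_1$ for $i = 2, \dots, n$, which preserve the determinant. Since the first column of $S$ is $e$ and the $(1,j)$-entry equals $x_j$ for $j \ge 2$, each such operation annihilates every entry of row $i$ except the diagonal entry $(i,i)$, which becomes $c - x_i$. The resulting matrix therefore has $e_1$ as its first column, zeros elsewhere in the first column, and a diagonal trailing principal submatrix $\operatorname{diag}(c - x_2, \dots, c - x_n)$. Expanding along the first column then yields
\[ \det S = \prod_{i=2}^n (c - x_i) = \prod_{i=2}^n (x_1 - \Sigma - x_i). \]

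It remains to verify that no factor vanishes. Here I would use that $x_1 - \Sigma = -\sum_{k=2}^n x_k$, so that $c - x_i = -\left( \sum_{k=2}^n x_k \right) - x_i$. The hypothesis $x_1 \ne \Sigma$ forces $\sum_{k=2}^n x_k \ne 0$, and since $x \ge 0$ this sum is in fact strictly positive; combined with $x_i \ge 0$, each factor $c - x_i$ is strictly negative, hence nonzero. Therefore $\det S \ne 0$ and $S$ is invertible.

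The computation is routine, so I do not anticipate a serious obstacle; the only point requiring care is the final nonvanishing step, where both hypotheses ($x \ge 0$ and $x_1 \ne \Sigma$) must be invoked \emph{together} to upgrade $\sum_{k \ge 2} x_k \ne 0$ to $\sum_{k \ge 2} x_k > 0$, and thereby guarantee strict negativity of every factor.
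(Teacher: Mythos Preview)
Your proposal is correct and follows essentially the same approach as the paper: subtract the first row from each subsequent row to reduce $S$ to an upper-triangular matrix whose determinant is $\prod_{i=2}^n(x_1 - x_i - \Sigma)$, then argue that nonnegativity of $x$ together with $x_1 \ne \Sigma$ forces every factor to be nonzero. Your nonvanishing argument is in fact spelled out more carefully than the paper's, which simply asserts that $\Sigma \ne x_1$ and $x_i \ge 0$ imply $\delta_i \ne \Sigma$ without making the sign explicit.
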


\begin{proof}
Following the properties of the determinant, 
\begin{align*}
|S| &= 
\begin{blockarray}{ccccccc}
& \mbox{\scriptsize1} & \mbox{\scriptsize2} & \mbox{\scriptsize\dots} & \mbox{\scriptsize i} & \mbox{\scriptsize\dots} & \mbox{\scriptsize n} \\
\begin{block}{c|cccccc|}
	\mbox{\scriptsize1} & 1 & x_2 & \dots & x_i & \dots & x_n\\
	\mbox{\scriptsize2} & 1 & x_1-\Sigma & \dots & x_i & \dots & x_n\\
	\mbox{\scriptsize\vdots} & \vdots & \vdots & \ddots & \vdots &  & \vdots \\
	\mbox{\scriptsize i} & 1 & x_2 & \dots & x_1-\Sigma & \dots & x_n \\
	\mbox{\scriptsize\vdots} & \vdots & \vdots & & \vdots & \ddots & \vdots \\
	\mbox{\scriptsize n} & 1 & x_2 & \dots & x_i & \dots & x_1-\Sigma \\
\end{block}
\end{blockarray}
\\ &=
\begin{blockarray}{ccccccc}
& \mbox{\scriptsize1} & \mbox{\scriptsize2} & \mbox{\scriptsize\dots} & \mbox{\scriptsize i} & \mbox{\scriptsize\dots} & \mbox{\scriptsize n} \\
\begin{block}{c|cccccc|}
	\mbox{\scriptsize1} & 1 & x_2  & \dots & x_i & \dots & x_n\\
	\mbox{\scriptsize2} & 0 & \delta_2-\Sigma & \dots & 0 & \dots & 0\\
	\mbox{\scriptsize\vdots} & \vdots & \vdots & \ddots & \vdots &  & \vdots \\
	\mbox{\scriptsize i} & 0 & 0 & \dots & \delta_i-\Sigma & \dots & 0 \\
	\mbox{\scriptsize\vdots} & \vdots & \vdots &  & \vdots & \ddots & \vdots \\
	\mbox{\scriptsize n} & 0 & 0 & \dots & 0 & \dots & \delta_n-\Sigma \\
\end{block}
\end{blockarray}
\\ &= \prod_{i=2}^n \left( \delta_i-\Sigma \right).
\end{align*}
Since $\Sigma\ne x_1$ and $x_i\ge0,~\forall i\in\langle n\rangle$, it follows that $\delta_i \ne \Sigma$, $\forall i\in\langle n\rangle\backslash\{1\}$, i.e., $\det(S)\ne0$. Therefore $S$ is invertible.
\end{proof}

\begin{theorem} \label{thm:diagonalizable}
If $x \ge 0$, then the nonnegative matrix $P_x$ is diagonalizable.
\end{theorem}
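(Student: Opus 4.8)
The plan is to exhibit $n$ linearly independent eigenvectors of $P_x$, since a matrix in $M_n(\bb{C})$ is diagonalizable precisely when its eigenvectors span $\bb{C}^n$. The preceding lemma already does the heavy lifting: it shows that the candidate eigenvector matrix $S = \begin{bmatrix} e & v_2 & \cdots & v_n \end{bmatrix}$ is invertible whenever $x \ge 0$ and $x_1 \ne \Sigma$. Accordingly, the proof splits according to whether $x_1 = \Sigma$ or not.

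First I would treat the generic case $x_1 \ne \Sigma$. Because $x \ge 0$ forces $\Sigma = \sum_k x_k \ge x_1$, the hypothesis $x_1 \ne \Sigma$ sharpens to $x_1 < \Sigma$; consequently $\delta_i = x_1 - x_i \le x_1 < \Sigma$ for every $i \in \langle n \rangle \backslash \{1\}$, so each $(\delta_i, v_i)$ is a genuine right-eigenpair of $P_x$ (as recorded after \eqref{eq:vvec}), and $(\Sigma, e)$ is one as well. Collecting these relations yields $P_x S = S D$ with $D = \text{diag}(\Sigma, \delta_2, \dots, \delta_n)$. By the lemma $S$ is invertible, whence $S^{-1} P_x S = D$ and $P_x$ is diagonalizable.

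It remains to dispose of the degenerate case $x_1 = \Sigma$, which is exactly the situation excluded by the lemma and is therefore the only real obstacle. Here $x_1 = \Sigma$ together with $x \ge 0$ forces $x_2 = \cdots = x_n = 0$; substituting this into \eqref{eq:perm_matrix} collapses $P_x$ to $x_1 I$, since the transposition producing row $i$ merely moves $x_1$ onto the $i$th diagonal slot while every other entry vanishes. A scalar multiple of the identity is already diagonal, hence trivially diagonalizable, completing the argument.

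The single point demanding care is the boundary between the two cases: everything hinges on the observation that nonnegativity converts the lemma's technical hypothesis $x_1 \ne \Sigma$ into the clean dichotomy ``$P_x$ is a scalar matrix'' versus ``$S$ is invertible,'' so that no eigenvalue-multiplicity bookkeeping or Jordan-form analysis is required.
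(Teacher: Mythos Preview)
Your proposal is correct and follows essentially the same route as the paper: both split into the degenerate case $x_1=\Sigma$ (where $P_x=x_1 I$) and the generic case, and in the latter both invoke the preceding lemma to get $S$ invertible and conclude $P_x=SDS^{-1}$. Your write-up is slightly more explicit about why $\delta_i\ne\Sigma$ and why $x_1=\Sigma$ collapses $P_x$ to a scalar matrix, but the argument is the same.
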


\begin{proof}
Notice that if $\Sigma=x_1$, then $P_x=x_1 I$ is diagonalizable.
Otherwise, if $\Sigma \ne x_1$, then  
\[
PS = 
\begin{bmatrix}
Pe & Pv_2 & \dots & Pv_n
\end{bmatrix}
=
\begin{bmatrix}
\Sigma e & \delta_2v_2 & \dots & \delta_nv_n
\end{bmatrix}
\]
and
\begin{align*}
SD &= 
\begin{bmatrix}
1 & x_2 & \dots & x_i & \dots & x_n\\
1 & x_1 - \Sigma & \dots & x_i & \dots & x_n\\
\vdots & \vdots & \ddots & \vdots & & \vdots \\
1 & x_2 & \dots & x_1 - \Sigma & \dots & x_n \\
\vdots & \vdots & & \vdots & \ddots & \vdots \\
1 & x_2 & \dots & x_i & \dots & x_1 - \Sigma \\
\end{bmatrix}
\begin{bmatrix}
\Sigma & 0 & \dots & 0 & \dots & 0 \\
0 & \delta_2 & \dots & 0 & \dots & 0 \\
\vdots & \vdots & \ddots & \vdots & & \vdots \\
0 & 0 & \dots & \delta_i & \dots & 0 \\
\vdots & \vdots & & \vdots & \ddots & \vdots \\
0 & 0 & \dots & 0 & \dots & \delta_n
\end{bmatrix} \\
&= 
\begin{bmatrix}
\Sigma & \delta_2x_2 & \dots & \delta_ix_i & \dots & \delta_nx_n \\
\Sigma & \delta_2(x_1-\Sigma) & \dots & \delta_ix_i & \dots & \delta_nx_n \\
\vdots & \vdots & \ddots & \vdots & & \vdots \\
\Sigma & \delta_2x_2 & \dots & \delta_i(x_1-\Sigma) & \dots & \delta_nx_n \\
\vdots & \vdots & & \vdots & \ddots & \vdots \\
\Sigma & \delta_2x_2 & \dots & \delta_ix_i & \dots & \delta_n(x_1-\Sigma)
\end{bmatrix} \\
&=
\begin{bmatrix}
\Sigma e & \delta_2v_2 & \dots & \delta_nv_n
\end{bmatrix},
\end{align*}
i.e., $PS = SD$. Since $S$ is invertible, it follows that $P=SDS^{-1}$.
\end{proof}

\begin{remark}
Theorem \ref{thm:diagonalizable} demonstrates that symmetry is not essential in the realization of Sule\u{\i}manova spectra. 
\end{remark}

\section{New Sufficient Condition}

In this section we give necessary and sufficient conditions for a list to be realizable by a permutative matrix of the form \eqref{eq:perm_matrix}. The following result gives a complete characterization and a novel sufficient condition for the DRNIEP (in Section \ref{novsuffsniep}, this condition will be shown to be sufficient for the SNIEP).

\begin{theorem} 
\label{thm:problem5}
If \( \Lambda = \{ \lambda_1, \dots, \lambda_n \} \) is a list of real numbers, then \( \Lambda \) is realizable by a matrix of the form \eqref{eq:perm_matrix} if and only if
\begin{equation}\label{tnn}
    s_1(\Lambda) \geq 0                          
\end{equation}
and
\begin{equation} \label{aux}
    s_1 (\Lambda) - n \lambda_i \geq 0,~i\in\langle n\rangle\backslash\{1\}.
\end{equation}
\end{theorem}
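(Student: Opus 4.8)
The plan is to exploit the explicit correspondence between the vector $x$ and the list $\Lambda$ recorded in \eqref{novsuffcond}, together with the elementary observation that a matrix of the form \eqref{eq:perm_matrix} is entrywise nonnegative if and only if $x \ge 0$. Indeed, every entry of $P_x$ is one of $x_1,\dots,x_n$, and each $x_j$ does occur in $P_x$, so $P_x$ is a nonnegative realizing matrix precisely when $x \ge 0$. Since conditions \eqref{tnn} and \eqref{aux} are exactly the assertions that the $n$ coordinates displayed on the right-hand side of \eqref{novsuffcond} are nonnegative, the whole theorem reduces to showing that realizability by \eqref{eq:perm_matrix} is equivalent to $x \ge 0$ for the distinguished vector $x$ attached to $\Lambda$.

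For the sufficiency direction I would define $x$ by \eqref{novsuffcond}, so that $x_1 = s_1(\Lambda)/n$ and $x_i = (s_1(\Lambda) - n\lambda_i)/n$ for $i \in \langle n\rangle\backslash\{1\}$. Hypotheses \eqref{tnn} and \eqref{aux} say precisely that $x_1 \ge 0$ and $x_i \ge 0$ for $i \ne 1$, hence $x \ge 0$ and $P_x$ is nonnegative. The summarized result from \cite{paparella2016} following \eqref{novsuffcond} then guarantees $\sigma(P_x) = \Lambda$, so $P_x$ is a realizing matrix of the desired form.

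For the necessity direction I would start from a nonnegative $P = P_x$ of the form \eqref{eq:perm_matrix} with $\sigma(P_x) = \Lambda$; by the observation above, $x \ge 0$. Two structural features then finish the argument. First, every diagonal entry of $P_x$ equals $x_1$, so $\operatorname{tr}(P_x) = n x_1$; since $\operatorname{tr}(P_x) = s_1(\Lambda)$ and $x_1 \ge 0$, this yields $s_1(\Lambda) = n x_1 \ge 0$, i.e.\ \eqref{tnn}. Second, the eigenvalues of $P_x$ are $\Sigma = \sum_{j=1}^n x_j$ and $\delta_i = x_1 - x_i$ for $i \ge 2$. Because each row of $P_x$ sums to $\Sigma$ and $x \ge 0$, the Perron--Frobenius theorem forces $\Sigma = \rho(P_x) = \rho(\Lambda)$; identifying this Perron eigenvalue with the distinguished entry $\lambda_1$ and the remaining eigenvalues with $\lambda_2,\dots,\lambda_n$, we obtain $\lambda_i = \delta_i = x_1 - x_i \le x_1 = s_1(\Lambda)/n$ for each $i \ne 1$, which is exactly \eqref{aux}.

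The main obstacle is the bookkeeping in the necessity direction: since $\Lambda$ is only a multiset, one must justify that the distinguished index $1$ in \eqref{aux} may be taken to correspond to the row-sum eigenvalue $\Sigma$. The clean way to handle this is the trace identity $x_1 = s_1(\Lambda)/n$ together with the inequalities $\Sigma \ge x_1 \ge \delta_i$, which show that $\Sigma$ is at least the mean $s_1(\Lambda)/n$ of $\Lambda$ while every $\delta_i$ is at most that mean; hence the labeling $\lambda_1 = \Sigma$ is consistent with \eqref{aux}, and the degenerate case $\Sigma = x_1$ (which forces $x_2 = \dots = x_n = 0$ and $P_x = x_1 I$, all eigenvalues equal) is checked directly.
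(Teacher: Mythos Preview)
Your proposal is correct and follows essentially the same route as the paper: both arguments rest entirely on the bijection \eqref{novsuffcond} between $x$ and $\Lambda$, together with the observation that $P_x \ge 0$ if and only if $x \ge 0$. The paper's own proof is the one-liner ``Immediate in view of \eqref{novsuffcond},'' whereas you have spelled out both directions and, in particular, confronted the multiset bookkeeping in the necessity direction (identifying $\lambda_1$ with the row-sum eigenvalue $\Sigma$ via $\Sigma \ge x_1 \ge \delta_i$) that the paper leaves implicit.
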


\begin{proof}
Immediate in view of \eqref{novsuffcond}.
\end{proof}

If $\mathcal{C}_n := \{ \lambda \in \mathbb{R}^n \mid \lambda~\text{satisfies \eqref{tnn} and \eqref{aux}} \}$, then $\mathcal{C}_n$ is a \emph{polyhedral cone}, and is \emph{finitely generated} (i.e., it has finitely-many \emph{extreme directions}). It is useful to determine the extreme directions of $\mathcal{C}_n$. To this end, let 
\[
    M_n := 
    \begin{cases}
    [1], & n=1 \\
    \begin{bmatrix}
        1 & e^\top \\
        e & -I
    \end{bmatrix}, & n > 1
    \end{cases}
\]
and, with a slight abuse of notation, let 
\begin{equation*}
    \text{span}(M_n) := \{ M_n y \mid y \in \mathbb{R}^n \}
\end{equation*}
and
\begin{align*}
    \coni{M_n} 
    := \{ M_n y \mid y\ge 0 \}                                                                               
    = \left\{ y_1 e + \sum_{i=2}^n y_i(e_1-e_i) \mid y_i \ge 0, ~i \in \langle n \rangle \right\}.
\end{align*}
In \cite{paparella2016} it was shown that 
\[
    M_n^{-1} 
    = \frac{1}{n}\begin{bmatrix}
        1 & e^\top \\
        e & J-nI
    \end{bmatrix},~n>1.
\]

The following result demonstrates that the columns of $M_n$ are the extreme directions of $\mathcal{C}_n$. 

\begin{theorem}
    $\mathcal{C}_n = \coni{M_n}$. 
\end{theorem}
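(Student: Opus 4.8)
The plan is to reduce the set equality to the nonsingularity of $M_n$ together with the explicit form of $M_n^{-1}$ recorded above. The central observation is that $\mathcal{C}_n$ is precisely the preimage of the nonnegative orthant under $M_n^{-1}$. Indeed, for $\lambda = \begin{bmatrix} \lambda_1 & \cdots & \lambda_n \end{bmatrix}^\top$, computing $M_n^{-1}\lambda$ reproduces exactly the vector $x$ displayed in \eqref{novsuffcond}, whose first entry is $s_1(\Lambda)/n$ and whose $i$th entry (for $i \ge 2$) is $(s_1(\Lambda) - n\lambda_i)/n$. Since $n > 0$, requiring $M_n^{-1}\lambda \ge 0$ is equivalent to requiring both \eqref{tnn} and \eqref{aux}; hence $\mathcal{C}_n = \{ \lambda \in \bb{R}^n : M_n^{-1}\lambda \ge 0 \}$. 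The only care needed here is tracking the harmless positive scalar $1/n$ and confirming that the inequalities indexed by $i \ge 2$ in \eqref{aux} align with rows $2$ through $n$ of $M_n^{-1}$.

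With this identification in hand, I would finish by a change of variables. Because $M_n$ is nonsingular, the map $y \mapsto M_n y$ is a bijection of $\bb{R}^n$, so every $\lambda$ is uniquely $\lambda = M_n y$ with $y = M_n^{-1}\lambda$. If a bijection statement feels too terse, I would instead argue the two inclusions: for $\mathcal{C}_n \subseteq \coni{M_n}$, given $\lambda \in \mathcal{C}_n$ set $y := M_n^{-1}\lambda \ge 0$ and note $\lambda = M_n y \in \coni{M_n}$; for $\coni{M_n} \subseteq \mathcal{C}_n$, given $\lambda = M_n y$ with $y \ge 0$, observe $M_n^{-1}\lambda = y \ge 0$, so $\lambda \in \mathcal{C}_n$. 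At this stage it is also worth recording explicitly that the columns of $M_n$ are $e$ and $e_1 - e_i$ for $i \in \langle n \rangle \backslash \{1\}$, so that the reader sees that the concrete generating set matches the description of $\coni{M_n}$ given just above the statement.

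Finally, I would dispose of the base case $n = 1$ directly: there $M_1 = [1]$, condition \eqref{aux} is vacuous, and both $\mathcal{C}_1$ and $\coni{M_1}$ equal the nonnegative ray $\{ \lambda_1 \ge 0 \}$. I do not anticipate any genuine obstacle, since the whole argument rests on the already-established formula $M_n^{-1}\lambda = x$ from \eqref{novsuffcond} and on the invertibility of $M_n$; the work is entirely in assembling these facts, not in any new estimate.
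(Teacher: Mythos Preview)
Your proposal is correct and follows essentially the same approach as the paper: both arguments hinge on the invertibility of $M_n$ and the identification of the defining inequalities \eqref{tnn}--\eqref{aux} with the componentwise nonnegativity of $y = M_n^{-1}\lambda$. The paper carries this out by writing $\lambda = M_n y$ and computing $\sum_k \lambda_k = n y_1$ and $\sum_k \lambda_k - n\lambda_i = n y_i$ directly, whereas you invoke the already-recorded formula for $M_n^{-1}\lambda$ from \eqref{novsuffcond}; these are the same computation viewed from the two sides of the bijection.
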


\begin{proof}
If $\lambda \in \text{span}(M_n)$, then there is a unique vector $y$ such that $\lambda = M_n y = e y_1 + \sum_{i=2}^{n}y_i(e_1-e_i)$, where $y_i \in \mathbb{R}, ~\forall i\in\langle n\rangle$. In particular, $\lambda_1=\sum_{k=1}^ny_k$ and $\lambda_i=y_1-y_i, ~\forall i\in\langle n\rangle\backslash\{1\}$. Notice that
\begin{equation}
    \sum_{k=1}^n \lambda_k = \sum_{k=1}^n y_k + \sum_{k=2}^n(y_1-y_k) = n y_1,
    \label{eqn:conitnn}
\end{equation}
and
\begin{equation}
    \sum_{k=1}^n \lambda_k - n \lambda_i = n y_1 - n(y_1-y_i) = n y_i,~i=2,\dots,n,
    \label{eqn:coniaux}
\end{equation}
and these inequalities are nonnegative if and only if $y_i \ge 0$, $i=1,\dots,n$, i.e, if and only if $\lambda \in \coni{M_n}$.
\end{proof}

\section{Relating Theorem \ref{thm:problem5} to Known Sufficient Conditions} \label{relsuffcond}

Mariju\'an et al.~\cite{mps2007} mapped various sufficient conditions for the RNIEP displaying their connections to one another. By finding the location of Theorem \ref{thm:problem5} within their map may provide a measure of importance to the RNIEP. The following are known sufficient conditions for the RNIEP.

\begin{theorem}
[Ciarlet \cite{ciarlet1968}]
\label{thm:ciarlet}
If $\Lambda=\{\lambda_1,\dots,\lambda_n\}$ satisfies $\lambda_1\ge|\lambda_i|$ for all $\lambda_i\in\Lambda$ and 
\[
|\lambda_i|\le\frac{\lambda_1}{n-1},~\forall i \in \langle n \rangle \backslash \{1\},
\]
then $\Lambda$ is realizable.
\end{theorem}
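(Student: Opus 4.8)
The plan is to prove the stronger statement that $\Lambda$ is in fact \emph{symmetrically} realizable, which is the natural route given the doubly stochastic flavor of the surrounding results. Reorder the list (a multiset, so this is harmless) so that $\lambda_1 = \rho(\Lambda)$; the hypothesis $\lambda_1 \ge |\lambda_i|$ guarantees that $\lambda_1$ is the entry of largest modulus, and in particular $\lambda_1 \ge 0$. If $\lambda_1 = 0$, then $|\lambda_i| \le \lambda_1/(n-1) = 0$ forces $\Lambda = \{0,\dots,0\}$, realized by the zero matrix, so we may assume $\lambda_1 > 0$.

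First I would build a real orthogonal matrix $Q = \begin{bmatrix} q_1 & \cdots & q_n \end{bmatrix}$ whose first column is $q_1 = e/\sqrt{n}$; such a $Q$ exists because $e/\sqrt{n}$ is a unit vector and can be extended to an orthonormal basis of $\mathbb{R}^n$. Setting $D := \text{diag}(\lambda_1,\dots,\lambda_n)$ and $A := QDQ^\top$, the matrix $A$ is symmetric with $\sigma(A) = \Lambda$ by construction, and $Ae = \lambda_1 e$, so $A$ has constant row sums $\lambda_1$. It then remains only to show that $A \ge 0$ entrywise.

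The key computation is the entrywise expansion
\[
a_{ij} = \sum_{k=1}^n \lambda_k q_{ik} q_{jk} = \frac{\lambda_1}{n} + \sum_{k=2}^n \lambda_k q_{ik} q_{jk},
\]
where the isolated term comes from $q_{i1} q_{j1} = 1/n$. The heart of the argument is to bound the remaining tail. Because $Q$ is orthogonal its rows are orthonormal, so $\sum_{k=1}^n q_{ik}^2 = 1$ and hence $\sum_{k=2}^n q_{ik}^2 = 1 - 1/n = (n-1)/n$ for every $i$. By the Cauchy--Schwarz inequality,
\[
\left| \sum_{k=2}^n \lambda_k q_{ik} q_{jk} \right| \le \Big(\max_{2 \le k \le n} |\lambda_k|\Big) \sum_{k=2}^n |q_{ik} q_{jk}| \le \frac{\lambda_1}{n-1}\sqrt{\frac{n-1}{n}}\sqrt{\frac{n-1}{n}} = \frac{\lambda_1}{n}.
\]
Combining the two displays gives $a_{ij} \ge \lambda_1/n - \lambda_1/n = 0$, so $A$ is nonnegative and realizes $\Lambda$.

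The main obstacle, and the step deserving the most care, is this tail bound, since it is precisely here that both hypotheses are consumed: the constraint $|\lambda_i| \le \lambda_1/(n-1)$ supplies the factor that cancels the $(n-1)/n$ coming from Cauchy--Schwarz, while $\lambda_1 \ge |\lambda_i|$ is what lets us fix $e$ as the Perron eigenvector and read off $\lambda_1$ as the common row sum. I would also note that the conclusion is insensitive to the particular choice of $Q$, and that equality in the Cauchy--Schwarz step is attained on the diagonal $i=j$, which indicates that the threshold $\lambda_1/(n-1)$ is sharp for this construction.
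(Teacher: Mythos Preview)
The paper does not supply its own proof of this theorem; it is quoted verbatim from Ciarlet \cite{ciarlet1968} and used only as a benchmark sufficient condition in Section~\ref{relsuffcond}. So there is no in-paper argument to compare against.

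Your argument is correct. The construction $A=QDQ^\top$ with $Qe_1=e/\sqrt{n}$ is exactly the classical device, and the chain
\[
\Big|\sum_{k\ge 2}\lambda_k q_{ik}q_{jk}\Big|\le\Big(\max_{k\ge 2}|\lambda_k|\Big)\sum_{k\ge 2}|q_{ik}q_{jk}|\le\frac{\lambda_1}{n-1}\cdot\frac{n-1}{n}
\]
is valid: the first step is the triangle inequality and the second is Cauchy--Schwarz applied to $(|q_{ik}|)_{k\ge 2}$ and $(|q_{jk}|)_{k\ge 2}$, using $\sum_{k\ge 2}q_{ik}^2=(n-1)/n$. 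This even yields symmetric realizability, which is stronger than what the paper needs. Your sharpness remark is also accurate: taking $\lambda_2=\dots=\lambda_n=-\lambda_1/(n-1)$ forces every diagonal entry $a_{ii}=\lambda_1/n-\tfrac{\lambda_1}{n-1}\cdot\tfrac{n-1}{n}=0$, so the bound cannot be relaxed within this construction. One cosmetic point: you invoke ``Cauchy--Schwarz'' for the whole display, but the first inequality is really just factoring out the sup-norm; labeling the two steps separately would make the write-up cleaner.
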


\begin{theorem}[Sule\u{\i}manova \cite{suleimanova1949}]
\label{thm:suleimanovacond}
If $\Lambda=\{\lambda_1,\dots,\lambda_n\}$ satisfies $\lambda_1\ge|\lambda_i|$ for all $\lambda_i\in\Lambda$ and 
\[
    \lambda_1 + \sum_{\lambda_i<0}\lambda_i \ge 0,
\]
then $\Lambda$ is realizable.
\end{theorem}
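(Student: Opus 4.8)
The plan is to reduce this to the already-established realizability of Sule\u{\i}manova spectra (Theorem~\ref{thm:paparella2016}) by separating off the ``extra'' nonnegative eigenvalues and then recombining realizing matrices via a direct sum. One should note at the outset that Theorem~\ref{thm:problem5} cannot be applied directly: the present hypotheses permit several positive entries (for instance $\{3,2,-1,-1,-1\}$, for which $s_1=2$ and $s_1-n\lambda_2=2-5\cdot 2<0$, so condition~\eqref{aux} fails); hence no matrix of the form~\eqref{eq:perm_matrix} need exist and Theorem~\ref{thm:paparella2016} is the appropriate tool.

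First I would record that $\lambda_1\ge|\lambda_1|$, so $\lambda_1\ge 0$; if $\lambda_1=0$ then $|\lambda_i|\le 0$ forces $\Lambda=\{0,\dots,0\}$, realized by the zero matrix, so assume $\lambda_1>0$. Put $N:=\{i\in\langle n\rangle\setminus\{1\}\mid\lambda_i<0\}$, let $P:=\langle n\rangle\setminus(\{1\}\cup N)$ index the remaining (necessarily nonnegative) entries, and set $\Lambda_-:=\{\lambda_1\}\cup\{\lambda_i\mid i\in N\}$. I would then verify that $\Lambda_-$ is a Sule\u{\i}manova spectrum in the sense of Definition~\ref{defn:suleimanova}: it has exactly one positive element, namely $\lambda_1$ (every other member is strictly negative), and
\[
s_1(\Lambda_-)=\lambda_1+\sum_{\lambda_i<0}\lambda_i\ge 0
\]
by hypothesis. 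Theorem~\ref{thm:paparella2016} then supplies a nonnegative matrix $A$ with $\sigma(A)=\Lambda_-$.

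Finally I would assemble the block-diagonal matrix $B:=A\oplus\bigoplus_{j\in P}[\lambda_j]$ (which reduces to $B=A$ when $P=\emptyset$). Each block $[\lambda_j]$ with $j\in P$ is a $1$-by-$1$ nonnegative matrix realizing $\lambda_j\ge 0$, and a direct sum of nonnegative matrices is nonnegative with spectrum equal to the multiset union of the summands' spectra; thus $B$ is nonnegative and $\sigma(B)=\Lambda_-\cup\{\lambda_j\mid j\in P\}=\Lambda$, so $\Lambda$ is realizable. I expect the only genuine content to be the decomposition step---recognizing that the Perron eigenvalue together with the negative eigenvalues constitutes a bona fide Sule\u{\i}manova spectrum, so that the nonnegative ``excess'' is absorbed trivially by direct summation---while the remaining verifications (nonnegativity and the spectrum of a block-diagonal matrix) are routine.
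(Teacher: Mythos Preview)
The paper does not actually prove Theorem~\ref{thm:suleimanovacond}; it is listed among the known sufficient conditions in Section~\ref{relsuffcond} and attributed to Sule\u{\i}manova without argument, so there is nothing to compare against. Your proof is correct: splitting off the nonnegative ``excess'' eigenvalues, recognizing that $\{\lambda_1\}\cup\{\lambda_i:\lambda_i<0\}$ is a bona fide Sule\u{\i}manova spectrum (Definition~\ref{defn:suleimanova}), invoking Theorem~\ref{thm:paparella2016}, and reassembling via a block-diagonal direct sum with the $1\times 1$ blocks $[\lambda_j]$ is a clean and standard derivation, and you handle the edge cases ($\lambda_1=0$, $P=\emptyset$) properly.
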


\begin{remark}
To differentiate Sule\u{\i}manova spectra from spectra that satisfy the condition in Theorem \ref{thm:suleimanovacond}, we will refer to the condition of Theorem \ref{thm:suleimanovacond} as the \emph{Sule\u{\i}manova condition}.
\end{remark}

\begin{theorem}[Salzmann \cite{s1972}]
\label{thm:salzmann}
If $\lambda_1\ge\lambda_2\ge\dots\ge\lambda_n$ are real numbers such that 
\[
    \sum_{i=1}^n\lambda_i \ge 0
\]
and 
\[
    \lambda_k + \lambda_{n-k+1} \le \frac{2}{n}\sum_{i=1}^n \lambda_i,~\forall k\in\langle\lfloor (n+1)/2\rfloor\rangle\backslash\{1\},
\]
then $\{\lambda_1,\dots,\lambda_n\}$ is realizable by a nonnegative diagonalizable matrix.
\end{theorem}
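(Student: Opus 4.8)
The plan is to construct a \emph{symmetric} nonnegative realizing matrix; being symmetric it is automatically diagonalizable, so this yields the theorem (indeed the stronger symmetric conclusion) without a separate diagonalizability argument. Throughout I write $m := \tfrac{1}{n}\sum_{i=1}^{n}\lambda_i$, so that $m\ge 0$ and $\lambda_1\ge m\ge\lambda_n$, and I group the eigenvalues into the symmetric pairs $\{\lambda_k,\lambda_{n-k+1}\}$, $k=1,\dots,\lfloor n/2\rfloor$, together with the central value $\lambda_{(n+1)/2}$ when $n$ is odd.

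First I would record a companion inequality to the hypotheses. The pairing bounds $\lambda_k+\lambda_{n-k+1}\le 2m$ and the putative bound $\lambda_1+\lambda_n\ge 2m$ are all invariant under the shift $\lambda_i\mapsto\lambda_i+c$, so to prove the latter I may assume $m=0$; then $\sum_{i=2}^{n-1}\lambda_i$ is the sum of the interior pair-sums $\lambda_k+\lambda_{n-k+1}$ ($2\le k\le\lfloor n/2\rfloor$) together with the central value $\lambda_{(n+1)/2}$ when $n$ is odd, and every interior pair-sum is $\le 0$ by hypothesis while $2\lambda_{(n+1)/2}\le 0$, whence $\sum_{i=2}^{n-1}\lambda_i\le 0$ and $\lambda_1+\lambda_n=\sum_{i=1}^{n}\lambda_i-\sum_{i=2}^{n-1}\lambda_i\ge 0=2m$. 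Thus the extreme pair carries a nonnegative \emph{surplus} above $2m$ while every interior pair (and the centre) sits at or below $2m$; the total surplus is exactly $\sum_{i=1}^{n}\lambda_i\ge 0$, and this is the positivity budget the construction spends.

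The building block is that a symmetric nonnegative matrix $\bigl[\begin{smallmatrix} s/2 & d/2\\ d/2 & s/2\end{smallmatrix}\bigr]$ realizes a pair with sum $s$ and difference $d\ge 0$ precisely when $s\ge 0$, and then has the positive Perron vector $(1,1)^\top$. Hence if \emph{every} pair-sum (and the centre) is nonnegative, a direct sum of such blocks already realizes $\Lambda$. In general some pairs run a deficit ($s<0$) and cannot stand alone; I would absorb these using Fiedler's lemma \cite{fiedler1974}: bordering a symmetric matrix $B$ with Perron eigenpair $(\alpha,u)$ by a block $G$ with Perron eigenpair $(\beta,v)$ as $\bigl[\begin{smallmatrix} B & \rho uv^\top\\ \rho vu^\top & G\end{smallmatrix}\bigr]$ replaces $\alpha$ in the spectrum by the two eigenvalues of $\bigl[\begin{smallmatrix}\alpha&\rho\\\rho&\beta\end{smallmatrix}\bigr]$ while retaining the remaining spectra of $B$ and $G$. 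Expanding always about \emph{Perron} eigenpairs keeps $u,v\ge 0$, so the border $\rho uv^\top$ is nonnegative for $\rho\ge 0$; and prescribing the two coupling eigenvalues to be $\{\alpha',\nu\}$ forces $\beta=\alpha'+\nu-\alpha$ and $\rho=\sqrt{(\alpha-\nu)(\alpha'-\alpha)}$, which is real exactly when $\nu\le\alpha\le\alpha'$. Thus a deficit eigenvalue $\nu$ can be introduced only by letting the running Perron value climb from $\alpha$ to some $\alpha'\ge\alpha$, the new diagonal entry being nonnegative precisely when the climb $\alpha'-\alpha$ covers the deficit $-\nu$; the climb is financed by the surpluses, the extreme pair's surplus $\lambda_1+\lambda_n\ge 2m$ providing the principal reservoir and $\lambda_1$ itself appearing as the final Perron value.

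The main obstacle is the global bookkeeping of this assembly: one must schedule the order in which the pairs are realized and fix a nondecreasing sequence of intermediate Perron values — these intermediates are free, as they never survive into the final spectrum — so that at every step $\rho$ is real and every freshly created diagonal entry and $2\times2$ block is nonnegative, and then verify that the resulting chain of budget inequalities is satisfiable if and only if $\sum_{i=1}^{n}\lambda_i\ge 0$ and $\lambda_k+\lambda_{n-k+1}\le 2m$ hold. I expect this feasibility argument — essentially showing that the surpluses can always be routed to cover the deficits under the pairing bound, a flow-type condition — to be the crux, rather than any single algebraic identity; the companion inequality derived above is the first and most important instance of the routing that must be carried out in general. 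Finally, small lists ($n\le 3$) serve as base cases and are realized by inspection, and repeated eigenvalues require no special treatment, the symmetric matrix produced being diagonalizable in all cases.
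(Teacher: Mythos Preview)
The paper does not prove Theorem~\ref{thm:salzmann}; it is quoted from Salzmann~\cite{s1972} as a known sufficient condition, solely so that the paper's own Theorem~\ref{thm:problem5} can be positioned relative to it in Theorem~\ref{thm:relations} and Figure~\ref{fig:map}. There is therefore no proof in the paper to compare your attempt against.

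Judged on its own merits, your outline is plausible but is explicitly a plan rather than a proof. The companion inequality $\lambda_1+\lambda_n\ge 2m$ is derived correctly, and using Fiedler's bordering lemma (which the paper itself states as Lemma~\ref{lemma:Fiedler} and uses for a different purpose in Section~\ref{novsuffsniep}) to glue $2\times2$ pair-blocks is a sound strategy. But you yourself identify the decisive step --- choosing an assembly order and a nondecreasing sequence of intermediate Perron values, and verifying that every $\beta$, every $\rho$, and every new diagonal block comes out nonnegative --- and then defer it, writing that you ``expect'' the feasibility argument to go through. That scheduling is not clerical bookkeeping; it is the entire content of the theorem. Until you actually exhibit the schedule (or give a clean inductive invariant that forces one to exist), you have not shown that the surpluses can always be routed to cover the deficits, and your parenthetical ``if and only if'' claim about the budget inequalities is an assertion, not an argument. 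The base cases and the companion inequality are fine, but the heart of the proof is missing.
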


\begin{theorem}[Perfect 1 \cite{perfect1953}]
\label{thm:perfect1}
Let 
\[ \Lambda=\{\lambda_0,\lambda_1,\lambda_{11},\dots,\lambda_{1t_1},\dots,\lambda_r,\lambda_{r1},\dots,\lambda_{rt_r},\delta\}, \] 
where $\lambda_0\ge|\lambda|$ for all $\lambda\in\Lambda$ and 
\[
    \sum_{\lambda\in\Lambda}\lambda\ge 0,~\delta\le 0,
\]
$\lambda_j\ge0$ and $\lambda_{ji}\le 0$ for all $j\in\langle r\rangle$ and $i\in\langle t_j\rangle$. If for every $j\in\langle r\rangle$, it follows that $\lambda_j + \delta \le 0$ and
\[
     \lambda_j + \sum_{i=1}^{t_j}\lambda_{ji}\le0,
\]
then $\Lambda$ is realizable.
\end{theorem}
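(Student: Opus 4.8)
The plan is to reduce the realization of $\Lambda$ to two more tractable problems---realizing each ``negative group'' in isolation and realizing a much shorter \emph{spine} list---and then to glue the pieces together by a block (quotient) construction. Write $\rho_j := -\sum_{i=1}^{t_j}\lambda_{ji} = \sum_{i=1}^{t_j}|\lambda_{ji}|\ge 0$ for the total negative mass of the $j$th group $G_j := \{\lambda_j,\lambda_{j1},\dots,\lambda_{jt_j}\}$, and note that the hypothesis $\lambda_j+\sum_i\lambda_{ji}\le 0$ says exactly $\lambda_j\le\rho_j$. The assembly tool I would use is the standard composition: given nonnegative blocks $C_1,\dots,C_r$ with constant row sums $C_je = c_je$ (so $c_j$ is the Perron root of $C_j$ with eigenvector $e$), place them on the diagonal, augment with two singleton blocks, and fill the off-diagonal $(i,j)$-position with the rank-one nonnegative coupling $q_{ij}\,e\,g_j^\top$, where $g_j\ge 0$ is the left Perron eigenvector of $C_j$ normalized so that $g_j^\top e = 1$. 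The block-constant subspace is then invariant, the induced action is the $(r{+}2)\times(r{+}2)$ quotient matrix $Q$ whose diagonal entries are the block row-sums $c_j$ and whose off-diagonal entries are the $q_{ij}$, and one checks that the spectrum of the assembled matrix is $\sigma(Q)\cup\bigcup_j\bigl(\sigma(C_j)\setminus\{c_j\}\bigr)$.

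The first genuine step is the bound that makes everything fit, namely $\rho_j\le\lambda_0$ for every $j$; this is where the otherwise unused hypotheses $\lambda_j+\delta\le 0$ and $s_1(\Lambda)\ge 0$ enter. Indeed, from
\[
 0 \le \sum_{\lambda\in\Lambda}\lambda = \lambda_0 + \sum_{k=1}^r\lambda_k - \sum_{k=1}^r\rho_k + \delta
\]
one gets $\sum_k\rho_k \le \lambda_0 + \sum_k\lambda_k + \delta$; isolating the $j$th term and using $\rho_k\ge\lambda_k\ge 0$ for $k\ne j$ gives $\rho_j\le \lambda_0+\lambda_j+\delta$, whence $\lambda_j+\delta\le 0$ yields $\rho_j\le\lambda_0$. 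Consequently, for each $j$ I may pick $c_j\in[\rho_j,\lambda_0]$, and the padded group list $\{c_j,\lambda_{j1},\dots,\lambda_{jt_j}\}$ then has exactly one positive entry and sum $c_j-\rho_j\ge 0$---that is, it is a Sule\u{\i}manova spectrum---so by Theorem \ref{thm:paparella2016}, realized concretely by the permutative matrix \eqref{eq:perm_matrix}, it admits a nonnegative realization $C_j$ with constant row sums $c_j$ and non-Perron spectrum exactly $\{\lambda_{j1},\dots,\lambda_{jt_j}\}$.

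It remains to realize the spine list $\{\lambda_0,\lambda_1,\dots,\lambda_r,\delta\}$ by a nonnegative quotient $Q$ whose diagonal entries at the $r$ group nodes are the prescribed values $c_j$, the two remaining diagonal entries (at the $\lambda_0$- and $\delta$-nodes) being free nonnegative parameters. The trace is consistent precisely because $\sum_k\rho_k\le\lambda_0+\sum_k\lambda_k+\delta$: taking $c_j=\rho_j$ leaves the nonnegative slack $\lambda_0+\sum_k\lambda_k+\delta-\sum_k\rho_k$ to be split between the two free diagonal entries. Since the spine has only the single negative entry $\delta$ with $\lambda_0\ge|\delta|$ and nonnegative sum, I would build $Q$ starting from the nonnegative block $\left[\begin{smallmatrix}(\lambda_0+\delta)/2 & (\lambda_0-\delta)/2\\ (\lambda_0-\delta)/2 & (\lambda_0+\delta)/2\end{smallmatrix}\right]$ realizing $\{\lambda_0,\delta\}$, and then introduce each $\lambda_j$ through a coupling that inflates the corresponding diagonal entry up to $c_j\ (\ge\lambda_j)$ while keeping the new eigenvalue equal to $\lambda_j$, the excess being absorbed by the Perron slack of $\lambda_0$.

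The main obstacle is exactly this last construction of $Q$: producing a nonnegative matrix with a prescribed spectrum that has one negative eigenvalue and, simultaneously, prescribed \emph{inflated} diagonal entries $c_j\ge\lambda_j$ is a constrained (prescribed-diagonal) inverse eigenvalue problem, and it is here that the bulk of the work---and the original ingenuity of Perfect's argument---resides. A naive two-node induction fails, because the trace of any $2\times 2$ quotient that retains $\lambda_0$ is pinned at $\lambda_0+c_j\ne\lambda_0+\lambda_j$, so the negative node $\delta$ must be carried along throughout rather than introduced at the end. Once $Q$ is in hand, the composition of the first paragraph assembles $Q$ with the blocks $C_1,\dots,C_r$ (and the two singleton blocks) into a single nonnegative matrix whose spectrum is $\sigma(Q)\cup\bigcup_j\{\lambda_{j1},\dots,\lambda_{jt_j}\}=\Lambda$, completing the proof.
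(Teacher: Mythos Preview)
The paper does not prove Theorem~\ref{thm:perfect1}: it is quoted verbatim from Perfect~\cite{perfect1953} as one of several known sufficient conditions against which Theorem~\ref{thm:problem5} is compared. There is therefore no in-paper proof to match your proposal against.

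As to the proposal itself, there is a genuine gap, and you identify it yourself. Your reduction is sound up to the point where you need the quotient matrix $Q$: the bound $\rho_j\le\lambda_0$ is derived correctly, and the block/quotient assembly you describe is a standard and valid mechanism (it is essentially Perfect's second method, or equivalently the Soto--Rojo style join). But the entire content of the theorem then rests on producing a nonnegative $(r{+}2)\times(r{+}2)$ matrix $Q$ with spectrum $\{\lambda_0,\lambda_1,\dots,\lambda_r,\delta\}$ and with the \emph{prescribed} diagonal entries $c_1,\dots,c_r$ at the group nodes. You do not construct $Q$; you sketch a two-by-two seed for $\{\lambda_0,\delta\}$ and then say you ``would introduce each $\lambda_j$ through a coupling that inflates the corresponding diagonal entry up to $c_j$,'' but no such coupling is exhibited, and you immediately concede that the naive two-node induction fails. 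That is exactly the nontrivial part of Perfect's argument (a prescribed-diagonal inverse eigenvalue step), and without it the proposal is an outline, not a proof. To close the gap you would need either Perfect's original construction or an appeal to a result such as Perfect's second theorem / Brauer's rank-one perturbation that handles the prescribed-diagonal constraint explicitly.
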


We note the following relations.

\begin{theorem}
\label{thm:relations}
~
\begin{enumerate}
    \item Theorem \ref{thm:problem5} is independent of Ciarlet.
    \item Theorem \ref{thm:problem5} is independent of Perfect 1.
    \item Sule\u{\i}manova spectrum are independent of Ciarlet.
    \item Sule\u{\i}manova spectrum imply Theorem \ref{thm:problem5} and the inclusion is strict.
    \item Theorem \ref{thm:problem5} implies Sule\u{\i}manova's condition and the inclusion is strict.
    \item Theorem \ref{thm:problem5} implies Salzmann and the inclusion is strict.
\end{enumerate}
\end{theorem}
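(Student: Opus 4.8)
The plan is to read the six assertions as statements about containments among the solution sets of the listed sufficient conditions, regarded as subsets of $\mathbb{R}^n$ for each fixed $n$. For an assertion ``$A$ is independent of $B$'' I would exhibit two explicit lists, one in $A\setminus B$ and one in $B\setminus A$; for an assertion ``$A$ implies $B$ and the inclusion is strict'' I would prove the containment $A\subseteq B$ and then produce a single witness in $B\setminus A$. The routine inequality checks I would suppress, recording only the witnesses and the one genuinely existential argument.

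Before the individual items I would record the reformulation that drives every implication. By \eqref{novsuffcond}, a list $\Lambda$ satisfies \eqref{tnn} and \eqref{aux} precisely when the vector $x$ of \eqref{novsuffcond} is nonnegative; moreover $\Sigma=\sum_k x_k=\lambda_1$ and $\lambda_i=x_1-x_i$ for $i\ge2$. From $x\ge0$ one gets, for each $i\ge2$, both $\lambda_1-\lambda_i=\sum_{k\ge2}x_k+x_i\ge0$ and $\lambda_1+\lambda_i=2x_1+\sum_{k\ge2,\,k\ne i}x_k\ge0$, whence $\lambda_1\ge|\lambda_i|$ and, in particular, $\lambda_1$ is the maximum entry. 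Equivalently, \eqref{aux} says $\lambda_i\le s_1(\Lambda)/n$ for $i\ne1$. These two facts are exactly what items (4)--(6) require.

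For the implications I would argue as follows. \emph{(4)} If $\Lambda$ is Sule\u{\i}manova with unique positive entry $\lambda_1$, then $\lambda_i\le0\le s_1(\Lambda)/n$ for $i\ne1$, giving \eqref{aux}; strictness is witnessed by $\{3,1,0\}$, which satisfies \eqref{tnn}--\eqref{aux} but has two positive entries. \emph{(5)} The bound $\lambda_1\ge|\lambda_i|$ is one half of the Sule\u{\i}manova condition; for the other half I would rewrite $\lambda_1+\sum_{\lambda_i<0}\lambda_i$ in terms of $x$ and collect terms, obtaining $(1+|N|)x_1+\sum_{i\ge2,\,i\notin N}x_i\ge0$, where $N=\{i\ge2:x_i>x_1\}$ indexes the negative eigenvalues; strictness is witnessed by $\{2,2,-1\}$. \emph{(6)} For sorted $\lambda_1\ge\cdots\ge\lambda_n$ and $2\le k\le\lfloor(n+1)/2\rfloor$, both indices $k$ and $n-k+1$ exceed $1$, so $\lambda_k,\lambda_{n-k+1}\le s_1(\Lambda)/n$ and hence $\lambda_k+\lambda_{n-k+1}\le\tfrac{2}{n}s_1(\Lambda)$, which is Salzmann's inequality; strictness is witnessed by $\{3,2,-2,-2\}$.

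The independence items are then dispatched with explicit pairs: $\{3,-2,-1\}$ lies in Theorem \ref{thm:problem5} and is Sule\u{\i}manova but fails Ciarlet's bound $|\lambda_i|\le\lambda_1/(n-1)$, while $\{4,1.9,-1.9\}$ satisfies Ciarlet yet violates \eqref{aux} and is not Sule\u{\i}manova, settling (1) and (3); for (2), $\{2,-1,-1\}$ satisfies \eqref{tnn}--\eqref{aux} whereas $\{3,3,-3,-3\}$ admits the Perfect-1 grouping $\lambda_0=\lambda_1=3$, $\lambda_{11}=\delta=-3$. I expect the main obstacle to be the reverse half of (2): certifying that a Sule\u{\i}manova list with at least two strictly negative entries, such as $\{2,-1,-1\}$, fails Perfect 1 requires ruling out \emph{every} admissible partition, not merely one. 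The crux is a counting argument: Perfect 1 organizes the negative entries as followers of nonnegative leaders $\lambda_j$ together with a single residual $\delta$, so an example with exactly one nonnegative entry (the Perron root $\lambda_0$) and no zero entry admits no leaders, forcing $r=0$ and hence a decomposition $\{\lambda_0,\delta\}$ covering only two entries --- a contradiction. Making this existential nonrealizability precise, rather than the routine inequality verifications, is where the care lies.
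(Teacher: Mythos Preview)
Your proposal is correct and follows essentially the same approach as the paper: explicit witness lists for the independence claims, and direct inequality manipulations for the implications (your item (6) argument is identical to the paper's, and your item (5) argument via the $x$-parametrization is a cosmetic variant of the paper's argument on the $\lambda_i$'s). The specific witnesses you chose differ from the paper's in items (1)--(4) and (6), but all of yours check out; your discussion of why $\{2,-1,-1\}$ fails Perfect~1 is in fact more careful than the paper, which simply asserts its witnesses without justification.
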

\begin{proof}
~
\begin{enumerate}
    \item Notice that $\{2, 0, -2\}$ satisfies Theorem \ref{thm:problem5} but not Ciarlet and the list $\{2, 1, -1\}$ satisfies Ciarlet but not Theorem \ref{thm:problem5}.
    \item Notice that $\{3, 1, -1\}$ satisfies Theorem \ref{thm:problem5} but not Perfect 1 and the list $\{3, 1, -1, -1\}$ satisfies Perfect 1 but not Theorem \ref{thm:problem5}.
    \item Notice that $\{3, -1, -2\}$ is a Sule\u{\i}manova spectra but doesn't satisfy Ciarlet and the list $\{2, 1, -1\}$ satisfies Ciarlet but is not a Sule\u{\i}manova spectra.
    \item Follows directly from Theorem \ref{thm:paparella2016} and the list $\{ 3, 1, -1 \}$ demonstrates that the inclusion is strict.
    \item Let $\lambda=\begin{bmatrix} \lambda_1 & \cdots & \lambda_n \end{bmatrix}^\top\in\coni{M}$ such that $\lambda_1\ge\dots\ge\lambda_p\ge0>\lambda_{p+1}\ge\dots\ge\lambda_n$. If $p=1$, i.e, if $\lambda$ has only one nonnegative term, then \eqref{tnn} demonstrates that $\lambda$ satisfies Sule\u{\i}manova's condition. Assume that $p\ge 2$. Notice that for all $k\in\langle p\rangle\backslash\{1\}$, 
    \[
        \sum_{i=1}^n\lambda_i - n\lambda_k \ge 0 \Longleftrightarrow \lambda_1 + \sum_{i=p+1}^n\lambda_i \ge n\lambda_k - \sum_{i=2}^p\lambda_i,
    \]
    thus, it suffices to show that $n\lambda_k\ge\sum_{i=2}^p\lambda_i$ for some $k\in\langle p\rangle\backslash\{1\}$. Notice that if $k=2$, then
    \[
        \sum_{i=2}^p\lambda_i \le (p-1)\lambda_2 \le n\lambda_2.
    \]
    Therefore, $\lambda$ satisfies Sule\u{\i}manova's condition and the list $\{2,1,-1\}$ demonstrates that the inclusion is strict. 
    \item Let $\lambda=\begin{bmatrix} \lambda_1 & \cdots & \lambda_n \end{bmatrix}^\top\in\coni{M}$ such that $\lambda_1\ge\dots\ge\lambda_n$. By \eqref{aux} it follows that for all $k\in\langle n\rangle\backslash\{1\}$,
    \[
        \sum_{i=1}^n\lambda_i \ge n\lambda_k.
    \]
    Thus, for all $k\in\langle\lfloor(n+1)/2\rfloor\rangle\backslash\{1\}$,
    \[
        2\sum_{i=1}^n\lambda_i \ge n\lambda_k + n\lambda_{n-k+1}.
    \]
    Therefore, $\lambda$ satisfies Salzmann and the list $\{ 3, 1, -2, -2 \}$ demonstrates that the inclusion is strict.\qedhere
\end{enumerate}
\end{proof}

Figure \ref{fig:map} contains an updated map as constructed by Mariju\'an et al.~\cite{mps2007}.

\begin{figure}[H]
\centering
\begin{tikzcd}[arrows=Rightarrow, row sep=normal, column sep=0.1em]
\text{Sule\u{\i}manova spectrum} \arrow[d] & \text{Ciarlet} \arrow[d] \\
\text{Theorem \ref{thm:problem5}} \arrow[r] \arrow[d] & \text{Sule\u{\i}manova's condition} \arrow[d] \arrow[r] & \text{Sule\u{\i}manova-Perfect} \arrow[d] \\
\text{Salzmann} \arrow[r] & \text{Fiedler} \Longleftrightarrow \text{Soto 1} \arrow[d] \Longrightarrow \text{Kellogg} \arrow[r] & \text{Borobia} \\
& \text{Soto 2} & \text{Perfect 1} \arrow[u]
\end{tikzcd}
\caption{Relating Theorem 4.1 to known sufficient conditions.} \label{fig:map}
\end{figure}
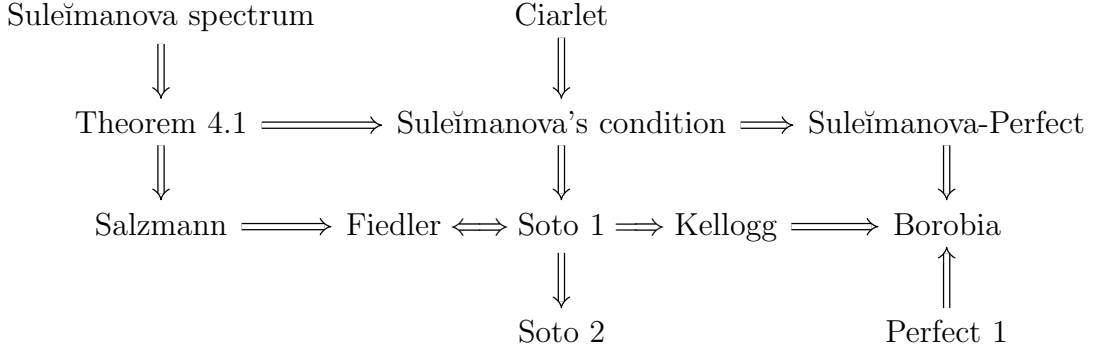

\section{Theorem \ref{thm:problem5} and the SNIEP} \label{novsuffsniep}

In this section, it is shown that any list satisfying \eqref{tnn} and \eqref{aux} is SNIEP realizable. 

The following result is due to Fiedler \cite{fiedler1974}.

\begin{lemma}
[{\cite[Lemma 2.2]{fiedler1974}}] 
\label{lemma:Fiedler}
Let $A$ be a symmetric $m$-by-$m$ matrix with eigenvalues $\alpha_1,\dots,\alpha_m$, let $u$, $\|u\|=1$, be a unit eigenvector corresponding to $\alpha_1$; let $B$ be a symmetric $n$-by-$n$ matrix with eigenvalues $\beta_1,\dots,\beta_n$, let $v$, $\|v\|=1$, be a unit eigenvector corresponding to $\beta_1$.

Then for any $\rho$, the matrix
\[
C = \begin{bmatrix}
A & \rho u v^\top \\
\rho v u^\top & B
\end{bmatrix}
\]
has eigenvalues $\alpha_2,\dots,\alpha_m,\beta_2,\dots,\beta_n,\gamma_1,\gamma_2$, where $\gamma_1,\gamma_2$ are eigenvalues of 
\begin{equation}
\label{cmatrix}
    \hat{C} = \begin{bmatrix}
    \alpha_1 & \rho \\
    \rho & \beta_1
    \end{bmatrix}.
\end{equation}
\end{lemma}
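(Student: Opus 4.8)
The plan is to reduce $C$ to a nearly diagonal form by an orthogonal similarity built from the spectral decompositions of $A$ and $B$. Since $A$ is symmetric and $u$ is a unit eigenvector for $\alpha_1$, the spectral theorem lets me extend $u$ to an orthonormal basis of eigenvectors of $A$; collecting these as columns yields an orthogonal matrix $U = \begin{bmatrix} u & u_2 & \cdots & u_m \end{bmatrix}$ with $U^\top A U = \operatorname{diag}(\alpha_1, \dots, \alpha_m)$. Doing the same for $B$ produces an orthogonal $V = \begin{bmatrix} v & v_2 & \cdots & v_n \end{bmatrix}$ with $V^\top B V = \operatorname{diag}(\beta_1, \dots, \beta_n)$.

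Next I would form the block-diagonal orthogonal matrix $W = \begin{bmatrix} U & 0 \\ 0 & V \end{bmatrix}$ and compute $W^\top C W$ block by block. The diagonal blocks become $\operatorname{diag}(\alpha_1, \dots, \alpha_m)$ and $\operatorname{diag}(\beta_1, \dots, \beta_n)$. The crucial observation is that, because $u$ and $v$ are the first columns of $U$ and $V$, orthonormality gives $U^\top u = e_1$ and $V^\top v = e_1$; hence the off-diagonal block $\rho U^\top u v^\top V = \rho e_1 e_1^\top$ is the $m$-by-$n$ matrix whose only nonzero entry is $\rho$ in the $(1,1)$ position (and symmetrically for the lower block). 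Thus $W^\top C W$ is diagonal except for the entries $\rho$ in positions $(1, m+1)$ and $(m+1, 1)$.

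Finally, a permutation similarity that groups coordinates $1$ and $m+1$ exhibits $W^\top C W$ as a direct sum of the $2$-by-$2$ block $\hat{C}$ and the diagonal matrix $\operatorname{diag}(\alpha_2, \dots, \alpha_m, \beta_2, \dots, \beta_n)$. Since these similarities preserve the spectrum, $\sigma(C) = \{\gamma_1, \gamma_2, \alpha_2, \dots, \alpha_m, \beta_2, \dots, \beta_n\}$, as claimed. The only point requiring care---rather than a genuine obstacle---is the first step when $\alpha_1$ or $\beta_1$ is a repeated eigenvalue: one must first complete $u$ (resp.~$v$) to an orthonormal basis of its eigenspace before extending to the full eigenbasis, which the spectral theorem guarantees is possible.
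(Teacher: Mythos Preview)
Your argument is correct: the orthogonal block-diagonal similarity by $W=\operatorname{diag}(U,V)$ collapses the rank-one off-diagonal blocks to $\rho e_1 e_1^\top$, and after a permutation the matrix splits as $\hat{C}\oplus\operatorname{diag}(\alpha_2,\dots,\alpha_m,\beta_2,\dots,\beta_n)$. The remark about repeated eigenvalues is well placed and the spectral theorem handles it exactly as you say.

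Note, however, that the paper does not actually prove this lemma; it merely quotes it from Fiedler~\cite{fiedler1974} and uses it as a black box in the subsequent results. So there is no ``paper's own proof'' to compare against here. Your proof is the standard one (and essentially the same as Fiedler's original), so there is nothing further to add.
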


Fiedler uses this result to show the following. 

\begin{theorem}
[{\cite[Theorem 2.3]{fiedler1974}}]
\label{thm:fiedler}
Let $A \in M_m(\mathbb{R})$ and $B \in M_n(\mathbb{R})$ be nonnegative, symmetric matrices. Suppose that $\sigma(A) = \{ \alpha_1,\dots,\alpha_m \}$, $\sigma(B) = \{ \beta_1,\dots,\beta_n \}$, and $\rho(A) = \alpha_1 \ge \beta_1 = \rho(B)$. Further suppose that $u$ and $v$ are the normalized, nonnegative right eigenvectors corresponding to $\alpha_1$ and $\beta_1$, respectively. If $\theta \ge 0$, $\rho:=(\theta(\alpha_1-\beta_1+\theta))^{1/2}$, and $C$ is the matrix defined as in \eqref{cmatrix}, then $C$ is nonnegative and $\sigma(C) = \{\alpha_1+\theta,\alpha_2,\dots,\alpha_m,\beta_1-\theta,\beta_2,\dots,\beta_n \}$.
\end{theorem}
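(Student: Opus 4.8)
The plan is to verify the two assertions separately, handling nonnegativity first and the spectrum second, with the latter reduced via Lemma \ref{lemma:Fiedler} to a single $2$-by-$2$ eigenvalue computation.

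For nonnegativity, I would begin by confirming that $\rho$ is a well-defined nonnegative real number. Since $\theta \ge 0$ and $\alpha_1 \ge \beta_1$ by hypothesis, the quantity $\alpha_1 - \beta_1 + \theta$ is nonnegative, so the radicand $\theta(\alpha_1 - \beta_1 + \theta)$ is nonnegative and $\rho \ge 0$. Because $u$ and $v$ are the normalized nonnegative Perron eigenvectors, the rank-one blocks $\rho u v^\top$ and $\rho v u^\top$ are entrywise nonnegative; together with the nonnegativity of $A$ and $B$, this shows every block of $C$ is nonnegative, hence $C \ge 0$.

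For the spectrum, I would apply Lemma \ref{lemma:Fiedler} with the given $u$, $v$, $A$, $B$, and this $\rho$, noting that $u$ and $v$ are precisely the unit eigenvectors for $\alpha_1$ and $\beta_1$ required there. The lemma immediately yields $\sigma(C) = \{\alpha_2,\dots,\alpha_m,\beta_2,\dots,\beta_n,\gamma_1,\gamma_2\}$, where $\gamma_1$ and $\gamma_2$ are the eigenvalues of $\hat{C}$ in \eqref{cmatrix}, so it remains only to compute these two numbers. The characteristic polynomial of $\hat{C}$ is $\gamma^2 - (\alpha_1+\beta_1)\gamma + (\alpha_1\beta_1 - \rho^2)$, whose discriminant is $(\alpha_1-\beta_1)^2 + 4\rho^2$.

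The key step is the algebraic simplification of this discriminant. Substituting $\rho^2 = \theta(\alpha_1-\beta_1+\theta)$ and writing $d := \alpha_1 - \beta_1$, one finds $d^2 + 4\theta(d+\theta) = (d+2\theta)^2$, so the discriminant is a perfect square. Since $d + 2\theta = \alpha_1 - \beta_1 + 2\theta \ge 0$, its square root is $\alpha_1 - \beta_1 + 2\theta$, and the two eigenvalues reduce cleanly to $\gamma_1 = \alpha_1 + \theta$ and $\gamma_2 = \beta_1 - \theta$. Combining these with the eigenvalues inherited from $A$ and $B$ gives exactly the claimed spectrum. The main obstacle is essentially conceptual rather than computational: one must recognize that the prescription $\rho=(\theta(\alpha_1-\beta_1+\theta))^{1/2}$ is engineered precisely so that the discriminant collapses to the perfect square $(\alpha_1-\beta_1+2\theta)^2$; once this is observed, the remainder is routine.
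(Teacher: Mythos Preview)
Your argument is correct. The paper does not actually prove this theorem; it is quoted verbatim from Fiedler \cite{fiedler1974} and stated without proof, so there is no ``paper's own proof'' to compare against. Your approach---invoking Lemma~\ref{lemma:Fiedler} and then computing the eigenvalues of the $2\times2$ matrix $\hat{C}$ by observing that the discriminant $(\alpha_1-\beta_1)^2+4\rho^2$ collapses to $(\alpha_1-\beta_1+2\theta)^2$---is exactly the natural one, and is in fact how Fiedler himself derives the result in the cited reference. One minor remark: the theorem statement's reference to ``the matrix defined as in~\eqref{cmatrix}'' is slightly misleading, since \eqref{cmatrix} labels $\hat{C}$ rather than the block matrix $C$; you correctly interpreted $C$ as the block matrix from Lemma~\ref{lemma:Fiedler}.
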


\begin{theorem}
\label{thm:korders}
If $\lambda\in\coni{M_n}$ and $\Lambda = \{ \Lambda_1, \dots, \lambda_n \}$, then $\Lambda$ is symmetrically realizable.
\end{theorem}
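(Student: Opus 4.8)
The plan is to argue by induction on $n$, using Fiedler's construction (Theorem~\ref{thm:fiedler}) to strip a single eigenvalue from the list at each step. Throughout I order the entries so that $\lambda_1 \ge \lambda_2 \ge \dots \ge \lambda_n$; this loses no generality because membership in $\coni{M_n}$ is determined by \eqref{tnn} and \eqref{aux}, and \eqref{aux} is symmetric in $\lambda_2,\dots,\lambda_n$ while forcing $\lambda_1$ to be the maximum entry.

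Before the induction I would record the consequences of $\lambda \in \coni{M_n}$ that power the argument. Setting $\bar\lambda := s_1(\Lambda)/n$, condition \eqref{tnn} gives $\bar\lambda \ge 0$ and \eqref{aux} rearranges to $\lambda_i \le \bar\lambda$ for all $i \in \langle n\rangle \setminus \{1\}$, whence $\lambda_1 \ge \bar\lambda$. Summing $\lambda_i \le \bar\lambda$ over $i = 2,\dots,n-1$ and using $s_1(\Lambda) = n\bar\lambda$ yields the key estimate $\lambda_1 + \lambda_n = s_1(\Lambda) - \sum_{i=2}^{n-1}\lambda_i \ge 2\bar\lambda \ge 0$; in particular $\lambda_1 \ge |\lambda_i|$ for every $i$, so $\lambda_1$ is the Perron value.

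The base case $n = 1$ is immediate, as $[\lambda_1]$ realizes $\{\lambda_1\}$ and $\lambda_1 = \bar\lambda \ge 0$. For the inductive step I would put $\theta := \max\{0,-\lambda_n\}$, $\beta_1 := \lambda_n + \theta \ge 0$, $\alpha_1 := \lambda_1 - \theta$, and form the reduced list $\Lambda' := \{\alpha_1,\lambda_2,\dots,\lambda_{n-1}\}$ and the singleton $B := [\beta_1]$. The crux is to check that $\Lambda' \in \coni{M_{n-1}}$ with $\alpha_1$ as its Perron value: one verifies $s_1(\Lambda') = s_1(\Lambda) - \beta_1 \ge 0$, that $s_1(\Lambda') - (n-1)\lambda_i \ge 0$ for $i = 2,\dots,n-1$ (a consequence of $s_1(\Lambda) \ge n\lambda_i$), and that $\alpha_1 \ge \lambda_2$ and $\alpha_1 \ge \beta_1$. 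By the inductive hypothesis $\Lambda'$ is realized by a nonnegative symmetric matrix $A$, and since $\alpha_1 = \rho(A)$ the Perron--Frobenius theorem supplies a normalized nonnegative eigenvector for $\alpha_1$; likewise $[1]$ serves for $B$. Applying Theorem~\ref{thm:fiedler} to $A$ and $B$ with this $\theta$ then produces a nonnegative symmetric matrix with spectrum $\{\alpha_1+\theta,\lambda_2,\dots,\lambda_{n-1},\beta_1-\theta\} = \Lambda$, closing the induction.

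I expect the main obstacle to be the verification, in the inductive step, that the perturbed head $\alpha_1$ is still the Perron value of $\Lambda'$ and that $\alpha_1 \ge \beta_1$, since Theorem~\ref{thm:fiedler} demands that the larger Perron value sit in the block being augmented. When $\lambda_n \ge 0$ one has $\theta = 0$ and these reduce to $\lambda_1 \ge \lambda_2$ and $\lambda_1 \ge \lambda_n$; when $\lambda_n < 0$ one has $\theta = -\lambda_n$, $\alpha_1 = \lambda_1 + \lambda_n$, and both inequalities follow from the estimate $\lambda_1 + \lambda_n \ge 2\bar\lambda \ge \lambda_2$ established above. For this reason I would prove that estimate with care at the outset, and I would state explicitly that the inductive realizer, being symmetric and nonnegative, automatically furnishes the nonnegative Perron eigenvector required by Theorem~\ref{thm:fiedler}.
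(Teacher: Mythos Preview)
Your argument is correct and follows the same inductive template as the paper---peel off one eigenvalue and invoke Theorem~\ref{thm:fiedler}---but the two proofs use genuinely different decompositions. The paper works in the conical coordinates: writing $\lambda = M_{k+1}y$ with $y\ge 0$, it takes $\theta=y_{k+1}$, sets $\beta=[y_1]=[\bar\lambda]$, and lets $\alpha$ be the vector obtained by simply deleting $y_{k+1}$ from the $y$-representation. This makes the verification that $\alpha\in\coni{M_k}$ and that $\alpha_1\ge\beta_1$ automatic (both are sums of nonnegative $y_i$'s), so no auxiliary inequalities are needed. Your approach stays in the $\lambda$-coordinates and chooses $\theta=\max\{0,-\lambda_n\}$ instead; this forces you to prove the estimate $\lambda_1+\lambda_n\ge 2\bar\lambda$ to control $\alpha_1$, and to check the reduced trace and \eqref{aux}-type inequalities by hand. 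The payoff of your route is that it never leaves the original data and makes explicit exactly which spectral inequalities drive the induction; the payoff of the paper's route is brevity, since the $y$-parametrization linearizes the cone and the split is just coordinate projection. One small point: your parenthetical ``a consequence of $s_1(\Lambda)\ge n\lambda_i$'' covers $s_1(\Lambda')-(n-1)\lambda_i\ge 0$ directly only when $\lambda_i\ge 0$; when $\lambda_i<0$ the inequality is immediate from $s_1(\Lambda')\ge 0$, so you may want to say so explicitly.
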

\begin{proof}
Proceed by induction on $n$. The result is trivial for the base-case when $n=1$; if $n=2$ and $\lambda = M_2 y$, $y \ge 0$, then
\[ \lambda = 
\begin{bmatrix}
\lambda_1 \\
\lambda_2
\end{bmatrix}
= 
\begin{bmatrix}
1 & 1 \\
1 & -1 
\end{bmatrix}
\begin{bmatrix}
y_1 \\
y_2
\end{bmatrix}
= 
\begin{bmatrix}
y_1 + y_2 \\
y_1 - y_2
\end{bmatrix}
\]
and the nonnegative matrix
\[ 
\begin{bmatrix}
y_1 & y_2 \\
y_2 & y_1
\end{bmatrix}
\]
has eigenvalues $\lambda_1 = y_1 + y_2$ and $\lambda_2 = y_1 - y_2$.

Assume that the result holds for all lists of length $k$, with $k \ge 2$. If $\lambda \in \coni{M_{k+1}}$, then there are nonnegative scalars $y_1,\dots,y_{k+1}$ such that
\[ 
\lambda =
\begin{bmatrix}
\lambda_1   \\
\lambda_2   \\
\vdots      \\
\lambda_{k+1}
\end{bmatrix}
=
\begin{bmatrix}
\sum_{i=1}^{k+1} y_i  \\
y_1 - y_2   \\
\vdots      \\
y_1 - y_{k+1}
\end{bmatrix}.
\]
If 
\[
\alpha :=
\begin{bmatrix}
\sum_{i=1}^{k} y_i  \\
y_1 - y_2   \\
\vdots      \\
y_1 - y_{k}
\end{bmatrix}~\text{and}~
\beta:=
\begin{bmatrix}
y_1
\end{bmatrix},
\]
then $\alpha \in \coni{M_k}$ and $\beta \in \coni{M_1}$. The result follows by applying Theorem \ref{thm:fiedler} with $\theta = y_{k+1}$.
\end{proof}

The proof of Theorem \ref{thm:korders} is similar to the proof that Fiedler gives in showing that every Sule\u{\i}manova spectrum is symmetrically realizable. Detracting from both demonstrations is that they are not constructive, i.e., a realizing matrix is not explicitly given. Johnson and Paparella \cite{johnsonpaparella2016} give a constructive proof that every Suleimanova spectrum is symmetrically realizable via similarity by Hadamard matrices, which we will show gives a constructive proof for lists satisfying \eqref{tnn} and \eqref{aux}.  

\begin{definition}
[Hadamard matrix]
An $n$-by-$n$ matrix $H=[h_{ij}]$ is called a \textit{Hadamard matrix} if $h_{ij}\in\{-1,1\}$ and $HH^\top=nI$. 
\end{definition}
A positive integer \(n\) is called a \emph{Hadamard order} if there is a Hadamard matrix of order \(n\). It is well-known, and otherwise easy to establish, that if \(n\) is a Hadamard order greater than one, then \(n\) can not be odd. It is also well-known that every power of two is a Hadamard order. In particular, Sylvester \cite{sylvester1867} showed that if
\[
    W_{2^n} := 
    \begin{cases}
    [1], & n = 0 \\
    \begin{bmatrix}
        W_{2^{n-1}} & \phantom{+} W_{2^{n-1}} \\
        W_{2^{n-1}} & -W_{2^{n-1}}
    \end{bmatrix}, & n > 0
    \end{cases}
\]
then \( W_{2^n} \) is a Hadamard matrix of order \(2^n\). The matrix $W_{2^n}$ is also known as the \textit{Walsh matrix or order $2^n$}.

When the first row and column of a Hadamard matrix $H$ is positive, we say that $H$ is a \textit{normalized Hadamard matrix}. Furthermore, any Hadamard matrix can be normalized to obtain another Hadamard matrix by scaling the rows and columns as needed. As such, and without loss of generality, we may assume that any Hadamard matrix is a normalized Hadamard matrix.

A real list $\Lambda =\{ \lambda_1, \dots, \lambda_n \}$ is called \emph{normalized} if $\lambda_1 = 1 \ge \lambda_2 \ge \cdots \ge \lambda_n$. Johnson and Paparella \cite{johnsonpaparella2016} used Hadamard matrices to establish the following result (recall that a nonnegative matrix $A$ is called \emph{doubly stochastic} if every row and column sum to unity).

\begin{theorem}
[{\cite[Theorem 6.3]{johnsonpaparella2016}}]
\label{thm:jpap16_hdh}
If $\Lambda=\{\lambda_1,\dots,\lambda_n\}$ is a normalized Sule\u{\i}ma\-nova spectrum, then $\Lambda$ is realizable by a symmetric, doubly stochastic matrix.
\end{theorem}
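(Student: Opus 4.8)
The plan is to produce an explicit realizer by conjugating the diagonal matrix of eigenvalues with a normalized Hadamard matrix, which is the Hadamard-similarity idea alluded to above. Suppose that $n$ is a Hadamard order and let $H=[h_{ij}]$ be a normalized Hadamard matrix of order $n$, so that the first column of $H$ equals $e$ and $HH^\top = H^\top H = nI$. Setting $D := \text{diag}(\lambda_1,\dots,\lambda_n)$, I would take
\[ A := \tfrac1n H D H^\top \]
as the candidate realizer, and then verify in turn that $A$ is symmetric, that $\sigma(A)=\Lambda$, that $A$ is doubly stochastic, and that $A$ is entrywise nonnegative.

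The first three checks are formal. Since $D$ is diagonal, $A^\top = \tfrac1n H D^\top H^\top = A$, so $A$ is symmetric. Writing $Q := \tfrac{1}{\sqrt n}H$, the Hadamard identity gives $QQ^\top = I$, so $A = QDQ^\top$ is an orthogonal diagonalization and hence $\sigma(A)=\sigma(D)=\Lambda$. For stochasticity, orthogonality of the columns of $H$ together with the normalization (first column $e$) yields $H^\top e = n e_1$, so that
\[ Ae = \tfrac1n H D H^\top e = H D e_1 = \lambda_1 H e_1 = \lambda_1 e = e, \]
using $\lambda_1=1$; symmetry then forces $e^\top A = e^\top$, so every row and column of $A$ sums to one.

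The substantive step is nonnegativity, and it is precisely here that the Sule\u{\i}manova hypotheses enter. Separating the $i=1$ contribution, for which $h_{j1}=h_{k1}=1$ and $\lambda_1=1$, gives
\[ a_{jk} = \tfrac1n\sum_{i=1}^n \lambda_i h_{ji}h_{ki} = \tfrac1n\Bigl(1 + \sum_{i=2}^n \lambda_i h_{ji}h_{ki}\Bigr). \]
Because $\Lambda$ is a normalized Sule\u{\i}manova spectrum, $\lambda_i \le 0$ for every $i\ge 2$, while $h_{ji}h_{ki}\le 1$; hence $\lambda_i h_{ji}h_{ki}\ge \lambda_i$ termwise, and therefore
\[ a_{jk} \ge \tfrac1n\Bigl(1+\sum_{i=2}^n \lambda_i\Bigr) = \tfrac1n\, s_1(\Lambda) \ge 0 \]
by \eqref{tnn}. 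Thus the single-positive-element condition orients every $\pm1$ perturbation favourably, while the trace condition supplies a uniform lower bound; together they give $A\ge 0$ and complete the construction whenever $n$ is a Hadamard order.

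The genuine difficulty is structural rather than computational: the construction requires a Hadamard matrix of order $n$, and such a matrix need not exist (no odd $n>1$ is a Hadamard order). For $n$ a power of two the Walsh matrices $W_{2^n}$ supply $H$ outright, so the argument covers infinitely many orders without modification; for a general $n$, however, the method breaks down, and I expect this passage to orders admitting no Hadamard matrix to be the main obstacle. I would accordingly either restrict attention to Hadamard orders or seek a separate construction for the remaining orders, noting that Theorem \ref{thm:korders} (via Fiedler's Theorem \ref{thm:fiedler}) guarantees only a \emph{symmetric}---not necessarily doubly stochastic---realizer for every $n$, so that the doubly stochastic refinement is exactly what the Hadamard structure buys and exactly where care is needed.
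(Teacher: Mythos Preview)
Your proposal is correct and follows precisely the construction the paper attributes to \cite{johnsonpaparella2016}: the paper does not supply its own proof of Theorem~\ref{thm:jpap16_hdh} but simply records that the realizer is $A=n^{-1}HDH^\top$ for a normalized Hadamard matrix $H$, and you have filled in the verifications of symmetry, spectrum, stochasticity, and nonnegativity exactly along these lines. Your identification of the Hadamard-order restriction is also on point and is the reason the paper subsequently develops Lemma~\ref{lemma:coni_split} and Corollary~\ref{thm:2orders}.

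One small remark on method: your nonnegativity argument is the direct entrywise bound
\[
a_{jk}=\tfrac1n\Bigl(1+\sum_{i\ge 2}\lambda_i h_{ji}h_{ki}\Bigr)\ge \tfrac1n\,s_1(\Lambda),
\]
which exploits the Sule\u{\i}manova sign pattern ($\lambda_i\le 0$ for $i\ge 2$) directly. The paper, when it proves the more general Theorem~\ref{thm:had_coni} for all of $\conv{M_n}$, instead rewrites $HDH^\top$ in the conical coordinates $y$ as $y_1\,nI+\sum_{k\ge 2}y_k\bigl(J-He_k(He_k)^\top\bigr)$ and observes that each summand is nonnegative. Your argument is shorter but genuinely uses that only one eigenvalue is positive; the paper's decomposition is what allows the extension beyond Sule\u{\i}manova spectra to the full cone $\mathcal{C}_n$.
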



In particular, it is shown that if $D=\operatorname{diag}(\lambda_1, \dots, \lambda_n)$, $H$ is a normalized Hadamard matrix of order $n$, and $A = n^{-1}H D H^\top$, then $A$ is a symmetric, doubly stochastic matrix with spectrum $\Lambda$. We show that this result extends to normalized lists satisfying \eqref{tnn} and \eqref{aux}. 

In what follows, $\conv{M_n}$ denotes the convex hull of the columns of $M_n$. 

\begin{theorem}
\label{thm:had_coni}
Let $\lambda \in \conv{M}$, $D=\operatorname{diag}(\lambda)$, $\Lambda = \{\lambda_1,\dots,\lambda_n \}$, and $H = H_m$ be a normalized Hadamard matrix of order $n$. Then the matrix $n^{-1}HDH^\top$ is a symmetric, doubly stochastic matrix with spectrum $\Lambda$.
\end{theorem}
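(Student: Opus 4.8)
The plan is to verify the three required properties—symmetry, correct spectrum, and double stochasticity—largely separately, reserving the convex structure of $\conv{M_n}$ for the one genuinely nontrivial property, namely nonnegativity. Write $A := n^{-1}HDH^\top$. Symmetry is immediate: since $D$ is diagonal, $A^\top = n^{-1}HD^\top H^\top = n^{-1}HDH^\top = A$. The spectrum is equally quick: because $H$ is Hadamard, $HH^\top = nI$, so $H^{-1} = n^{-1}H^\top$ and hence $A = HD(n^{-1}H^\top) = HDH^{-1}$; thus $A$ is similar to $D$ and $\sigma(A) = \sigma(D) = \Lambda$. Neither of these steps uses anything about $\lambda$ beyond $D = \operatorname{diag}(\lambda)$.

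For double stochasticity I would exploit that the map $v \mapsto n^{-1}H\operatorname{diag}(v)H^\top$ is linear in $v$. Writing $\lambda = y_1 e + \sum_{i=2}^n y_i(e_1 - e_i)$ as a convex combination of the columns of $M_n$ (so $y \ge 0$ and $\sum_i y_i = 1$, which is exactly what $\lambda \in \conv{M_n}$ means; note this forces $\lambda_1 = \sum_i y_i = 1$), linearity gives
\[
A = y_1 A(e) + \sum_{i=2}^n y_i\, A(e_1 - e_i), \qquad A(v) := n^{-1}H\operatorname{diag}(v)H^\top .
\]
Since the symmetric doubly stochastic matrices form a convex set (symmetry, nonnegativity, and unit row/column sums are all preserved under convex combination), it suffices to show that each generator matrix $A(e)$ and $A(e_1 - e_i)$ is itself symmetric and doubly stochastic; the convex combination $A$ then inherits these properties.

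The generator computations are short. For the first generator, $\operatorname{diag}(e) = I$, so $A(e) = n^{-1}HH^\top = I$, which is symmetric doubly stochastic. For $i \ge 2$, writing $h^{(k)} := He_k$ for the $k$th column of $H$ and using $\operatorname{diag}(e_1 - e_i) = e_1 e_1^\top - e_i e_i^\top$, we get $A(e_1 - e_i) = n^{-1}\!\left(h^{(1)}(h^{(1)})^\top - h^{(i)}(h^{(i)})^\top\right)$. Normalization gives $h^{(1)} = e$, so this equals $n^{-1}(J - h^{(i)}(h^{(i)})^\top)$, whose $(r,s)$-entry is $n^{-1}(1 - h_{ri}h_{si})$; since $h_{ri}, h_{si} \in \{-1,1\}$, this lies in $\{0, 2/n\}$ and is nonnegative, while the row and column sums equal $1$ because $h^{(i)}$ is orthogonal to $e$ for $i \ge 2$ (so $(h^{(i)})^\top e = 0$ and $Je = ne$). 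Hence every generator is symmetric doubly stochastic, and therefore so is $A$.

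The crux of the whole argument is the nonnegativity step, and the convex reduction to the extreme directions of $\conv{M_n}$ is precisely the device that renders it routine. Attempting to control the signs of the entries $A_{rs} = n^{-1}\sum_k \lambda_k h_{rk}h_{sk}$ directly for an arbitrary admissible $\lambda$ would be the main obstacle; passing to the generators decouples the sign analysis from the coefficients $y_i$, after which the identity $1 - h_{ri}h_{si} \in \{0,2\}$ settles the matter and the nonnegative convex weights $y_i$ do the rest.
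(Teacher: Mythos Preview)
Your proof is correct and follows essentially the same route as the paper: both exploit the convex decomposition $\lambda = y_1 e + \sum_{i\ge 2} y_i(e_1-e_i)$ and arrive at $A = y_1 I + n^{-1}\sum_{k\ge 2} y_k\bigl(J - He_k(He_k)^\top\bigr)$, with nonnegativity coming from the observation that the entries of $J - He_k(He_k)^\top$ lie in $\{0,2\}$. The only minor difference is in how double stochasticity is justified---you verify it generator by generator via orthogonality of the columns of $H$, whereas the paper simply notes that $e$ is the Perron eigenvector with eigenvalue $\lambda_1 = 1$ and invokes symmetry.
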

\begin{proof}
For every $k\in\langle n\rangle$, let $D_k := \lambda_k e_ke_k^\top$ and let 
\begin{align*}
    M_k := HD_kH^\top &= \lambda_kHe_k(He_k)^\top.
\end{align*}
Notice that $D=\sum_{k=1}^n D_k$. 

Because the entries of $He_k(He_k)^\top$ are in $\{-1,1\}$, it follows that the entries of $J-He_k(He_k)^\top$ are in $\{0,2\}$. Furthermore, $\sum_{k=1}^nHe_k(He_k)^\top = HIH^\top = nI$. 

Since $\lambda\in\conv{M}$, it follows that there are nonnegative scalars $y_1,\dots,y_n$ summing to unity such that $\lambda_1 = \sum_{k=1}^n y_k = 1$ and $\lambda_i=y_1-y_i$ for $i\in\langle n\rangle\backslash\{1\}$. Notice that $\Lambda$ is normalized and 
\begin{align*}
        H D H^\top 
        &= \sum_{k=1}^n M_k = \lambda_1He_1(He_1)^\top + \sum_{k=2}^n\lambda_kHe_k(He_k)^\top \\
        &= \left(\sum_{k=1}^n y_k \right) He_1(He_1)^\top + \sum_{k=2}^n (y_1-y_k)He_k(He_k)^\top \\
        &= \sum_{k=1}^n y_1\left(He_k(He_k)^\top\right) + \sum_{k=2}^n y_k\left(He_1(He_1)^\top-He_k(He_k)^\top\right) \\
        &= y_1nI + \sum_{k=2}^n y_k\left(J-He_k(He_k)^\top\right) \ge 0.
\end{align*}
The matrix $n^{-1}H DH^\top$ is clearly symmetric, has spectrum $\Lambda$, and is doubly stochastic $e$ is a right eigenvector corresponding to the Perron root $1$.
\end{proof}

\begin{remark}
Notice that $(\lambda_i,~n^{-1/2}He_i)$ is a right-eigenpair of $n^{-1}HDH^\top$, $\forall i\in\langle n\rangle$. In particular, $(\lambda_1,~n^{-1/2}e)$ is a right-eigenpair.
\end{remark}

The following lemma will be useful for our next main result.

\begin{lemma}
\label{lemma:coni_split}
Let $\lambda\in\coni{M_{m+n}}$, $m,n \ge 1$. If 
\[ \alpha := \begin{bmatrix}\lambda_1-\theta \\ \lambda_2 \\ \vdots \\ \lambda_m \end{bmatrix}~\text{and}~\beta := \begin{bmatrix} \lambda_{m+1} + \theta \\ \lambda_{m+2} \\ \vdots \\ \lambda_{m+n} \end{bmatrix}, \] with
\begin{equation}
\label{eqn:sigma}
    \theta := \frac{n}{m+n}\sum_{i=1}^{m+n}\lambda_i - \sum_{i=m+1}^{m+n}\lambda_i,
\end{equation}
then $\alpha\in\coni{M_m}$ and $\beta\in\coni{M_n}$.
\end{lemma}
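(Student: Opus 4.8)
The plan is to verify directly that $\alpha$ and $\beta$ satisfy the two defining inequalities \eqref{tnn} and \eqref{aux} of their respective cones, invoking the characterization of $\coni{M_p}$ as the set $\mathcal{C}_p$ of real vectors meeting \eqref{tnn} and \eqref{aux}. Write $N := m+n$ and $S := \sum_{i=1}^{N}\lambda_i$, and abbreviate $S_A := \sum_{i=1}^{m}\lambda_i$ and $S_B := \sum_{i=m+1}^{N}\lambda_i$, so that $S = S_A + S_B$ and, by \eqref{eqn:sigma}, $\theta = \tfrac{n}{N}S - S_B$. Since $\lambda \in \coni{M_N}$, I have on hand precisely the inequalities $S \ge 0$ together with $S \ge N\lambda_i$ for every $i \in \langle N\rangle\backslash\{1\}$.

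First I would check \eqref{tnn} for each vector by summing its entries. The only entry of $\alpha$ differing from the corresponding entry of $\lambda$ is its first, so the entries of $\alpha$ sum to $S_A - \theta$; substituting $\theta$ telescopes this to $S_A + S_B - \tfrac{n}{N}S = \tfrac{m}{N}S \ge 0$. Symmetrically, the entries of $\beta$ sum to $S_B + \theta = \tfrac{n}{N}S \ge 0$. This establishes \eqref{tnn} for both $\alpha$ and $\beta$, and it is the single step where the particular value of $\theta$ prescribed by \eqref{eqn:sigma} is used.

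Next I would check \eqref{aux}. The key observation is that \eqref{aux} constrains only the entries indexed $2,\dots$, and these entries of $\alpha$ and $\beta$ coincide with entries of $\lambda$: for $i \in \langle m\rangle\backslash\{1\}$ one has $\alpha_i = \lambda_i$, and for $j \in \langle n\rangle\backslash\{1\}$ one has $\beta_j = \lambda_{m+j}$ with $m+j \in \langle N\rangle\backslash\{1\}$. Using the block sums just computed, \eqref{aux} for $\alpha$ reads $\tfrac{m}{N}S \ge m\lambda_i$, i.e.\ $S \ge N\lambda_i$, while \eqref{aux} for $\beta$ reads $\tfrac{n}{N}S \ge n\lambda_{m+j}$, i.e.\ $S \ge N\lambda_{m+j}$; both are exactly the inequalities inherited from $\lambda \in \coni{M_N}$. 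Hence $\alpha \in \coni{M_m}$ and $\beta \in \coni{M_n}$.

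There is no serious obstacle here; the entire content lies in recognizing that $\theta$ is engineered so the subvector sums split the total as $\tfrac{m}{N}S$ and $\tfrac{n}{N}S$, and that the shift applied to only the leading coordinate of each block leaves every \eqref{aux} constraint (whose right-hand side never references a leading coordinate) identical to one already guaranteed for $\lambda$. The degenerate cases $m=1$ or $n=1$ need no separate treatment, since then the relevant index set $\langle m\rangle\backslash\{1\}$ (respectively $\langle n\rangle\backslash\{1\}$) is empty and only \eqref{tnn} must be verified.
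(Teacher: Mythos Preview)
Your proof is correct but proceeds along a different axis than the paper's. The paper passes to the extremal parametrization: it writes $\lambda = y_1 e + \sum_{i=2}^{m+n} y_i(e_1-e_i)$ with $y_i\ge 0$, computes $\theta = \sum_{i=m+1}^{m+n} y_i$, and then reads off explicit conical decompositions of $\alpha$ and $\beta$ in the $y$-coordinates. You instead stay entirely on the inequality side, invoking the identification $\coni{M_p}=\mathcal{C}_p$ and checking \eqref{tnn}--\eqref{aux} directly. Your route is arguably cleaner here, since the key cancellation $\sum\alpha_i = \tfrac{m}{N}S$, $\sum\beta_i = \tfrac{n}{N}S$ makes \eqref{aux} for the subvectors a rescaling of \eqref{aux} for $\lambda$, with no coordinate change required. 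The paper's route, on the other hand, yields the explicit formula $\theta = \sum_{i=m+1}^{m+n} y_i$ as a by-product, which it immediately reuses in the subsequent observation that $\alpha_1-\beta_1+\theta = \sum_{i=2}^{m+1} y_i \ge 0$ (needed to guarantee $\rho$ is real); in your framework this would require a separate short computation.
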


\begin{proof}
If $\lambda\in\coni{M_{m+n}}$, then $\lambda = y_1e + \sum_{i=2}^{m+n} y_i(e_1-e_i)$, where $y_i \ge 0,~\forall i\in\langle m+n\rangle$, i.e., $\lambda_1=\sum_{i=1}^{m+n}y_i$ and $\lambda_i=y_1-y_i,~ \forall i \in \langle m+n\rangle \backslash\{1\}$. Notice that
\begin{align*}
    \theta 
    &= \frac{n}{m+n} \left( \sum_{i=1}^{m+n} y_i + \sum_{i=2}^{m+n} (y_1-y_i) \right) - \sum_{i=m+1}^{m+n} (y_1-y_i) \\
    &= ny_1-\sum_{i=m+1}^{m+n} (y_1-y_i) = \sum_{i=m+1}^{m+n} y_i \ge 0.
\end{align*}
Thus,
\begin{equation*}
    \alpha 
    = 
    \begin{bmatrix}
    \lambda_1 - \theta  \\
    \lambda_2           \\
    \vdots              \\
    \lambda_m 
    \end{bmatrix}
    = \begin{bmatrix}
    \sum_{i=1}^m y_i    \\
    y_1 - y_2           \\
    \vdots              \\
    y_1 - y_m 
    \end{bmatrix}                                                                               
    = y_1e + \sum_{i=2}^m y_i(e_1-e_i)
\end{equation*}
and
\begin{equation*}
    \beta 
    = 
    \begin{bmatrix}
    \lambda_{m+1} + \theta  \\
    \lambda_{m+2}           \\
    \vdots                  \\
    \lambda_{m+n} 
    \end{bmatrix}
    = 
    \begin{bmatrix}
    y_1 + \sum_{i=m+2}^{m+n} y_i    \\
    y_1 - y_{m+2}                   \\
    \vdots                          \\
    y_1 - y_{m+n} 
    \end{bmatrix}                                                                               
    = y_1e + \sum_{i=2}^n y_{m+i}(e_1-e_i),
\end{equation*} 
i.e., $\alpha \in \coni{M_m}$ and $\beta \in \coni{M_n}$
\end{proof}

\begin{observation}
\label{obs:rho}
If $\alpha$ and $\beta$ are defined as above, then $\rho := (\theta(\alpha_1-\beta_1+\theta))^{1/2}$ is real and nonnegative.
\end{observation}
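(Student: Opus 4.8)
The plan is to reduce the claim to a sign condition. Since $\rho$ is defined as $(\theta(\alpha_1-\beta_1+\theta))^{1/2}$, it is real and nonnegative precisely when the radicand $\theta(\alpha_1-\beta_1+\theta)$ is a nonnegative real number; taking the principal square root then yields a real, nonnegative $\rho$. The proof of Lemma \ref{lemma:coni_split} already establishes that $\theta=\sum_{i=m+1}^{m+n}y_i\ge 0$, so it suffices to show that the second factor $\alpha_1-\beta_1+\theta$ is also nonnegative, whence the product of two nonnegative quantities is nonnegative.

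First I would simplify $\alpha_1-\beta_1+\theta$ using the definitions $\alpha_1=\lambda_1-\theta$ and $\beta_1=\lambda_{m+1}+\theta$. Substituting these, the $\theta$ terms partially cancel and one is left with $\alpha_1-\beta_1+\theta=\lambda_1-\lambda_{m+1}-\theta$. Next I would pass to the coordinate representation guaranteed by $\lambda\in\coni{M_{m+n}}$, namely $\lambda_1=\sum_{i=1}^{m+n}y_i$ and $\lambda_{m+1}=y_1-y_{m+1}$ with every $y_i\ge 0$, together with the expression $\theta=\sum_{i=m+1}^{m+n}y_i$ obtained in Lemma \ref{lemma:coni_split}. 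A short computation then collapses $\lambda_1-\lambda_{m+1}-\theta$ to $\sum_{i=2}^{m}y_i+y_{m+1}$, which is a sum of nonnegative terms (and is simply $y_{m+1}$, the empty sum being $0$, in the edge case $m=1$). Hence $\alpha_1-\beta_1+\theta\ge 0$, and combined with $\theta\ge 0$ this shows the radicand is nonnegative.

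I do not anticipate a substantive obstacle; the argument is essentially index bookkeeping once the correct $y_i$-representation is invoked. The only points requiring care are to justify reusing the same nonnegative scalars $y_1,\dots,y_{m+n}$ from Lemma \ref{lemma:coni_split}—legitimate because $\alpha$ and $\beta$ are built from the same fixed $\lambda\in\coni{M_{m+n}}$—and to note explicitly the $m=1$ boundary case so that the empty sum is handled correctly.
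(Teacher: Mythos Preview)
Your proposal is correct and follows essentially the same route as the paper: both arguments express $\alpha_1-\beta_1+\theta$ in terms of the nonnegative scalars $y_i$ and obtain $\sum_{i=2}^{m+1} y_i\ge 0$ (which is exactly your $\sum_{i=2}^{m}y_i+y_{m+1}$). The only cosmetic difference is that the paper substitutes the $y$-expressions for $\alpha_1$ and $\beta_1$ directly, whereas you first rewrite the quantity as $\lambda_1-\lambda_{m+1}-\theta$ before passing to the $y_i$.
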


\begin{proof}
Since $\alpha_1 = \sum_{i=1}^m y_i$ and $\beta_1 = y_1 + \sum_{i=m+2}^{m+n} y_i$, it follows that 
\[
    \alpha_1-\beta_1+\theta = \left(\sum_{i=1}^m y_i - y_1 - \sum_{i=m+2}^{m+n} y_i + \sum_{i=m+1}^{m+n} y_i\right) = \sum_{i=2}^{m+1} y_i \ge 0.
\]
Therefore, $\rho$ is real and nonnegative.
\end{proof}

\begin{corollary}
\label{thm:2orders}
Let \( \lambda\in \coni{M_{m+n}}\), \(\alpha=(\lambda_1-\theta,\lambda_2,\dots,\lambda_m)\), and \(\beta=(\lambda_{m+1}+\theta, \lambda_{m+2},\dots,\lambda_{m+n})\), where $m$ and $n$ are Hadamard orders and $\theta$ is given by \eqref{eqn:sigma}. Then the symmetric nonnegative matrix
\[
    C = \begin{bmatrix}
    A & \rho J/\sqrt{mn} \\
    \rho J^\top/\sqrt{mn} & B
    \end{bmatrix},
\]
where $A=m^{-1}H_m\operatorname{diag}(\alpha)H_m^\top$, $B=n^{-1}H_n\operatorname{diag}(\beta)H_n^\top$, and $\rho=(\theta(\lambda_1-\lambda_{m+1}-\theta)/mn)^{1/2}$, has spectrum $\Lambda$.
\end{corollary}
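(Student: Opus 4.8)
The plan is to obtain $\sigma(C)=\Lambda$ from Fiedler's lemma (Lemma \ref{lemma:Fiedler}) applied to the two diagonal blocks, with all of the required spectral data handed to us by Lemma \ref{lemma:coni_split}, Theorem \ref{thm:had_coni}, and Observation \ref{obs:rho}. First I would collect the ingredients. By Lemma \ref{lemma:coni_split} we have $\alpha\in\coni{M_m}$ and $\beta\in\coni{M_n}$, so the computation in the proof of Theorem \ref{thm:had_coni}---which uses only that the generating coefficients are nonnegative, and not the normalization $\sum_i y_i=1$---applies verbatim to show that $A=m^{-1}H_m\operatorname{diag}(\alpha)H_m^\top$ and $B=n^{-1}H_n\operatorname{diag}(\beta)H_n^\top$ are symmetric and entrywise nonnegative. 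Since $m^{-1/2}H_m$ is orthogonal, $A$ is orthogonally similar to $\operatorname{diag}(\alpha)$, whence $\sigma(A)=\{\alpha_1,\dots,\alpha_m\}$; moreover, the first column of a normalized Hadamard matrix is $e$, so $u:=m^{-1/2}e$ is a unit eigenvector of $A$ for $\alpha_1$. Symmetrically, $\sigma(B)=\{\beta_1,\dots,\beta_n\}$ with unit eigenvector $v:=n^{-1/2}e$ for $\beta_1$.

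With these in hand I would invoke Lemma \ref{lemma:Fiedler}. The off-diagonal block of $C$ is a multiple of $J=ee^\top$, and since $uv^\top=(mn)^{-1/2}J$ this block has precisely the form $\rho\,uv^\top$ required by the lemma, where $\rho$ is the Fiedler parameter determined by $\rho^2=\theta(\alpha_1-\beta_1+\theta)$. Lemma \ref{lemma:Fiedler} then gives $\sigma(C)=\{\alpha_2,\dots,\alpha_m,\beta_2,\dots,\beta_n\}\cup\sigma(\hat C)$, where $\hat C$ is the matrix of \eqref{cmatrix} with diagonal entries $\alpha_1,\beta_1$. Because $\alpha_i=\lambda_i$ for $2\le i\le m$ and $\beta_j=\lambda_{m+j}$ for $2\le j\le n$, it remains only to compute the two eigenvalues of $\hat C$.

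The crux is this $2\times2$ computation. Substituting $\alpha_1=\lambda_1-\theta$ and $\beta_1=\lambda_{m+1}+\theta$, I would evaluate $\operatorname{tr}\hat C=\alpha_1+\beta_1=\lambda_1+\lambda_{m+1}$ and $\det\hat C=\alpha_1\beta_1-\rho^2=\alpha_1\beta_1-\theta(\alpha_1-\beta_1+\theta)=(\alpha_1+\theta)(\beta_1-\theta)=\lambda_1\lambda_{m+1}$, so that $\sigma(\hat C)=\{\lambda_1,\lambda_{m+1}\}$ and therefore $\sigma(C)=\Lambda$. Symmetry of $C$ is clear, and nonnegativity follows by combining $A,B\ge0$ with Observation \ref{obs:rho}, which guarantees that $\rho$ is real and nonnegative so that the off-diagonal block is nonnegative.

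I expect the only real difficulty to be bookkeeping at the interface with Fiedler's results. In particular, one should use the ``for any $\rho$'' statement in Lemma \ref{lemma:Fiedler} rather than Theorem \ref{thm:fiedler}: the latter additionally demands $\rho(A)=\alpha_1\ge\beta_1=\rho(B)$, an ordering that need not hold for an arbitrary split of $\lambda\in\coni{M_{m+n}}$, whereas Observation \ref{obs:rho} already supplies exactly what the lemma needs, namely $\alpha_1-\beta_1+\theta\ge0$ (so that $\rho$ is real). Some care is likewise required to reconcile the scalar multiplying $J$ in the statement of $C$ with the Fiedler factor $\rho\,(mn)^{-1/2}$, ensuring the off-diagonal block is exactly $\rho\,uv^\top$.
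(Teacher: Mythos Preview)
Your proposal is correct and follows exactly the route the paper takes: the paper's proof is the single sentence ``the result follows by a straightforward application of Lemma~\ref{lemma:Fiedler}, Theorem~\ref{thm:had_coni}, Lemma~\ref{lemma:coni_split}, and Observation~\ref{obs:rho},'' and you have simply unpacked that application---including the $2\times2$ trace/determinant computation showing $\sigma(\hat C)=\{\lambda_1,\lambda_{m+1}\}$ and the observation that only the nonnegativity (not the normalization) in Theorem~\ref{thm:had_coni} is needed here. Your caution about invoking Lemma~\ref{lemma:Fiedler} rather than Theorem~\ref{thm:fiedler} is well placed and matches the paper's choice; the bookkeeping you flag between the stated $\rho$ and the Fiedler parameter is a genuine typographical inconsistency in the statement (the example confirms the off-diagonal entries are $\rho_F/\sqrt{mn}$ with $\rho_F^2=\theta(\alpha_1-\beta_1+\theta)$), and your reading is the intended one.
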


\begin{proof}
The result follows by a straightforward application of Lemma \ref{lemma:Fiedler}, Theorem \ref{thm:had_coni}, Lemma \ref{lemma:coni_split}, and Observation \ref{obs:rho}.
\end{proof}

\begin{example}
Let $\lambda=\begin{bmatrix} 20 & 1 & -2 & -3 & -4 & -5 \end{bmatrix}^\top$. Notice that $\lambda \in \coni{M_6}$ since 
\[ \lambda = M_6 y, \]
with \( y = 1/6 \begin{bmatrix} 7 & 1 & 19 & 25 & 31 & 37 \end{bmatrix}^\top\). 

With $m=4$, $n=2$, $\theta=34/3$, $\rho = \sqrt{323/18}$, $\alpha=(26/3,1,-2,-3)$, and $\beta=(22/3,-5)$, it follows that the matrix
\[
    C = 
    \frac{1}{6}
    \left[
    \begin{array}{cccc|cc}
        7 & 13 & 22 & 10 & \sqrt{646} & \sqrt{646} \\
        13 & 7 & 10 & 22 & \sqrt{646} & \sqrt{646} \\
        22 & 10 & 7 & 13 & \sqrt{646} & \sqrt{646} \\
        10 & 22 & 13 & 7 & \sqrt{646} & \sqrt{646} \\
        \hline 
        \sqrt{646} & \sqrt{646} & \sqrt{646} & \sqrt{646} & 7 & 37 \\
        \sqrt{646} & \sqrt{646} & \sqrt{646} & \sqrt{646} & 37 & 7 \\
        
    \end{array}
    \right],
\] 
has $\Lambda$ as its spectrum.
\end{example}

We conclude by noting that the conditions in Section \ref{relsuffcond} are sufficient for SNIEP realizability \cite{mps2017}; thus, the map contained in Figure \ref{fig:map} applies to the SNIEP.

\section*{Acknowledgment}

We gratefully acknowledge the comments and suggestions from the anonymous referee and from handling-editor Raphael Loewy. 

\bibliography{references}
\bibliographystyle{abbrv}

\end{document}